\title{A Positivstellensantz for Conditional SAGE Signomials}
\author[1]{Allen Houze Wang \thanks{Email: allen.wang@uwaterloo.ca. Part of the work was done while on an internship at Borealis AI.}}
\author[1]{Priyank Jaini \thanks{Email: pjaini@uwaterloo.ca. Presently at UvA Bosch Delta Lab, University of Amsterdam.}}
\author[1]{Yaoliang Yu \thanks{Email: yaoliang.yu@uwaterloo.ca}}
\author[1,2]{Pascal Poupart \thanks{Email: ppoupart@uwaterloo.ca.}}
\affil[1]{University of Waterloo, Vector Institute.}
\affil[2]{Borealis AI}
\date{}
\begin{document}

\maketitle
\begin{abstract}
Recently, the conditional SAGE certificate has been proposed as a sufficient condition for signomial positivity over a convex set. In this article, we show that the conditional SAGE certificate has a hiearchy that is \textit{complete}. That is, for any signomial $f(\xv) = \sum_{j=1}^{\ell}c_j \exp(\Av_j\xv)$ defined by rational exponents that is positive over a compact convex set $\mathcal{X}$, there is $p \in \mathbb{Z}_+$ and a specific positive definite function $w(\xv)$ such that $w(\xv)^p f(\xv)$ may be verified by the conditional SAGE certificate. The completeness result is analogous to Positivstellensatz results from algebraic geometry, which guarantees representation of positive polynomials with sum of squares polynomials. The result gives rise to a convergent hierarchy of lower bounds for a constrained signomial optimization problem over an  \textit{arbitrary} compact convex set that is computable via the conditional SAGE certificate.   
\end{abstract}

\section{Introduction}
A \textit{signomial} is a function of the form: $f(\xv) = \sum_{j=1}^{\ell} c_j \exp(\Av_j \xv)$, where $\cv \in \mathbb{R}, \Av_j \in \mathbb{R}^n \mbox{ for } j = 1...l$ are fixed. Optimization of such function subject to signomial inequalities and equalities is called signomial programming (SP). Although computationally difficult, SPs have wide range of applications in chemical engineering \cite{Rountree82}, aeronautics \cite{York2018b-sig-aero}, circuit design \cite{Jabr07}, communications network optimization \cite{Chiang09}, and machine learning \cite{Zhao16}. \\\\
Signomials may be thought of as a generalization of polynonmials. By a change of variable $y_i = \exp x_i$, one has the expression $p(\yv) = \sum_{j=1}^{\ell} c_j \prod_{i=1}^n y_i ^{\alpha_{ij}}$; in polynomials, the exponents are restricted to be integers. Algorithms for polynomial optimization, with its wide applications, have been well studied. The algorithms are based on sum of squares (SOS) certificate of polynomial positivity described by Positivstellensatz results from algebraic geometry and are computationally tied to semidefinite programming (SDP) \cite{Marshall08, Parrilo00}. More recently, the concept of sums of nonnegative circuit polynomials (SONC) has been proposed as a new sufficient condition for positivity suited for sparse polynomials, which may be used to design efficient algorithms that depend on the number of terms in the polynomials and not the degrees \cite{Iliman16}. These methods for polynomial optimization reap the equivalence between global optimization of a function and verification of its positivity. Following this view, Chandrasekaran and et al. proposed the seminal Sums-of-AM/GM Exponential (SAGE) certificate of signomial positivity \cite{ChandrasekaranShah16}. The SAGE certificate is based on finding a decomposition of signomials into sum of parts, each being positive and is efficiently verifiable. The key insight is that if a signomial has at most one negative term, certifying its positivity may be reduced to checking the feasibility of a sufficiently small set of convex constraints. The SAGE certificate, a sufficient condition for signomial positivity, follows such certificate. In 2019, Murray et al. generalized SAGE certificates to signomial positivity over a convex set, namely conditional SAGE, derived based on convex duality \cite{MurrayChandrasekaranWierman19}. The generalization finds that when a signomial has one negative term, certifying its positivity over a convex set may again be reduced to checking the feasibility of a small set of convex constraints. The same authors have also adapted SAGE certificate to polynomials and have shown certain equivalence between SAGE and SONC \cite{MurrayChandrasekaranWierman19}. \\\\
Certification of function positivity is an active topic as it is computationally equivalent to optimization. Common to all of the above study is the notion of \textit{hierarchy}. SOS, SONC, SAGE certificates all provide only sufficient conditions for positivity; not all positive polynomials or signomials may be certified directly via SOS, SONC or SAGE. However, with additional computational complexity, larger subset of positive functions may be verified as so. The specific way in which computational complexity is increased in the hierarchy depends on individual results. In many results, additional computational complexity is imposed by multiplication of an extra function to the one in question or inclusion of additional terms in the decomposition. The hierarchy is \textit{complete} if all positive functions of a given class may be verified as so at some finite level of the hierarchy. This article provides a completeness result to the conditional SAGE hierarchy. The result guarantees that after multiplication of a function with sufficiently many terms, every signomial that is positive over a compact convex set may be verified via the conditional SAGE certificate.

\section{Related Work and Contribution}
In algebraic geometry, Positivstellensatz results have characterized positive polynomials. Positivstellensantz comes in different forms. The earliest Positivstellensatz results from the early 20th century due to Artin, Reznick, Polya characterize the global positivity of polynomials. Given a polynomial $p(\xv)$ that is globally positive, the theorems show that there exists some specific function $w(\xv)$ and $p \in \mathbb{Z}_+$ such that $w(\xv)^p p(\xv)$ is easily certified to be positive - either having positive coefficients or is SOS. By contrast, Positivstellensantz results due to Stengle, Putinar, and Schmudgen from late 20th century provide a certificate for a polynomial $p(\xv)$ positive over a semialgebraic set $\mathcal{K} = \{\xv: g_i(\xv) \geq 0, \; i = 1....m \} $ by finding a representation of $p(\xv)$ as a composition of $(g_i(\xv))_{i = 1 \ldots m}$ and some SOS polynomials. More recently, Dressler et al. have shown a Schmudgen-type Positivstellensatz using SONC polynomials \cite{Dressler17}.  \\\\
Positivstellensatz have been extended to signomials. In 2008, Delzell extended Polya's result to signomials with rational exponents and have shown that it is impossible to extend it to signomials with arbitrary exponents \cite{Delzell2008}. In introducing SAGE, Chandrasekaran et al. have shown that for any globally positive signomial $f(\xv)$ with rational exponents satisfying some assumptions, there exists some specific function $w(\xv)$ and $p \in \mathbb{Z}_+$ such that $w(\xv)^p f(\xv)$ is a SAGE signomial \cite{ChandrasekaranShah16}. In fact, the proof essentially shows that $w(\xv)^p f(\xv)$ is a signomial with positive coefficients, which is also a SAGE signomial. This may be viewed as an analogue of Reznick-type Positivstellensatz, but for signomials \cite{Reznick1995}. The same article also presented a convergent hierarchy for signomial positivity over a constrained set defined by signomial inequalities. In particular, the result shows that if a signomial is positive on a compact set defined by signomial inequalities, the Legrangian relaxation may be verified via the SAGE certificate. \\\\
The contribution of this article is a completeness theorem showing that for any signomial function $f(\xv)$ that is positive over a compact convex set $\mathcal{X}$, there exists $p \in \mathbb{Z}_+$, and a specific positive definite function $w(\xv)$ such that $w(\xv)^p f(\xv)$ may be verified by the conditional SAGE certificate induced by $\mathcal{X}$. The current result has a few notable characteristics. First, unlike the convergent hierarchy for unconstrained SAGE due to Chandrasekaran et al., it does not require assumptions on the exponents besides rationality. Second, the convergent hierarchy holds for \textit{any arbitrary compact convex set}. Thus it is unique from historical Positivstellensatz results for polynomials positive either globally or on semialgebraic sets (recall that a semialgebraic set is defined by finite polynomial inequalities), Delzell's extension of Polya-type Positivstellensatz to signomials which assumed positivity over the nonnegative orthant, or the constrained SAGE hierarchy due to Chandrasekaran et al. for signomials which assumed the constrained set to be defined by signomial inequalities. In the proofs for previous constrained SAGE and the recent SONC hierarchies, redundant constrains due to the compactness assumption are added to guarantee Archimedean property in the set generated by the functions defining the constrained set, which allows appeal to representation theorems. The proof of the current result also uses redundant constraints, but instead of appealing to representation theorems, it finds reduction to a particular Positivstellensatz result for polynomials over a semialgebraic set. 

\section{Background and Notations}
\subsection{Notations}
We use bold fonts to denote vectors and matrices. Use $v_i$ to denote its $i$th coordinate and $\vv_{\backslash i} \in \mathbb{R}^{n-1}$ to denote the vector $\vv \in \mathbb{R}^n$ with the $i$th coordinate removed. Given a matrix $\Av \in \mathbb{R}^{n \times m}$, use $\Av_i \in \mathbb{R}^m$ to denote its $i$th row, and use $\Av_{\backslash i} \in \mathbb{R}^m$ to denote the submatrix with $i$th row removed. Given a row vector $\uv \in \mathbb{R}^n$ and column vector $\vv \in \mathbf{R}^n$, use $\uv \vv$ denote the dot product between them. Given a function that maps from real to real (i.e. $f: \mathbb{R} \to \mathbb{R}$), overload its definition by allowing a vector input for which the output is the element wise map of the entries of the vector. For example, given $\xv \in \mathbb{R}^n$, $\exp(\xv) = [\exp(x_1), \ldots \exp(x_n)]^\top$. Use $\mathbb{R}[\xv]$ to denote the ring of multivariate polynomials on $\mathbb{R}^n$ with real coefficients. For a polynomial $f \in \mathbb{R}[\xv]$, use $\text{deg}(f)$ to denote its degree, the degree of the highest order terms. \\\\ For brevity, given $\cv \in \mathbb{R}^\ell$ and $\Av \in \mathbb{R}^{\ell \times n}$, use the following to denote the signomial defined by $\cv$ and $\Av$:
\begin{align*}
    \text{Sig}(\cv, \Av)(\xv) = \sum_{j=1}^\ell c_j \exp (\Av_j \xv).
\end{align*}
Given $\Av \in \mathbb{R}^{\ell \times n}$ and a set $\mathcal{X} \subset \mathbb{R}^n$, define the cone of coefficients $\cv$ for which $\text{Sig}(\cv, \Av)(\xv)$ is positive over $\mathcal{X}$:
\begin{align*}
    C_{NNS}(\cv, \Av)(\xv) = \{\cv \in \mathbb{R}^\ell:  \text{Sig}(\cv, \Av)(\xv) \geq 0 \; \forall \xv \in \mathcal{X} \},
\end{align*}
and use the following notation for relative entropy, also known as KL-divergence. Given vectors $\vv, \uv \in \mathbb{R}^n$, define:
\begin{align*}
    D(\vv, \uv) = \sum_{i=1}^n v_i \log (\frac{v_i}{u_i}),
\end{align*}
where the log is base 2. Given $\Av \in \mathbb{R}^{\ell \times n}$, define the following. For some $p \in \mathbb{Z}_{+}$,
\begin{align*}
    E_p(\Av) = [\Av_1 \ldots \Av_{\ell^{(p+1)}}]^\top \mbox{ where } (\Av_i)_{i = 1 \ldots \ell^{(p+1)}} = \\ \{ \Av \in \mathbb{R}^n : \Av = \sum_{i=1}^\ell w_i \Av_i \mbox{ such that } w_i \in \mathbb{Z} \; \forall i, \mbox{ and }  \sum_i^\ell w_i \leq p \}.
\end{align*}
The notation is imported from \cite{ChandrasekaranShah16}. $E_p(\Av)$ is the matrix whose rows are the integer combinations of rows in $\Av$ where the weights of the combinations sum up to $p$. \\
\subsection{Certificate of Positivity and Optimization}
Certificate of function positivity is motivated by its equivalence to optimization. Consider the following constrained optimization problem:
\begin{align*}
    f^* = \inf_{\xv \in \mathcal{X}} f(\xv),
\end{align*}
which may be reduced to checking the positivity of a function over the constrained set. 
\begin{align*}
    f^* = \sup \lambda \; \mbox{ such that } f(\xv) - \lambda \geq 0 \; \forall \xv \in \mathcal{X}.
\end{align*}
Consider a function class $F$, and a subset $F_\mathcal{X} \subseteq F$ such that $\inf_{\xv \in \mathcal{X}} f(\xv) \geq 0 \iff f \in F_\mathcal{X}$. The optimization problem is solved if membership in $F_\mathcal{X}$ can be checked. 
\begin{align*}
    f^* = \sup \lambda \; \mbox{ such that } f(\xv) - \lambda \in F_\mathcal{X}.
\end{align*}
Naturally, certifying function positivity is no easier than the optimization problem itself. Of interest is to find a sufficient condition for function positivity; we want to develop a tractable set $G \subset F_\mathcal{X}$ such that checking whether $\text{Sig}(\cv, \Av) \in G$ is sufficiently easy. Then:
\begin{align*}
    f_G = \sup \lambda \; \mbox{ such that } f(\xv) - \lambda \in G
\end{align*}
is a tractable problem and $f_G \leq f^*$.
\subsection{SAGE}
Such efficiently verifiable approximation of positivity has been developed for signomials over convex sets. In this section we describe the conditional SAGE certificate for signomial positivity. Although the SAGE certificate was originally developed as a certificate for \textit{global} positivity, Murray et al. have generalized SAGE to positivity over an arbitrary convex set $\mathcal{X} \subset \mathbb{R}^n$, which encompasses the case when $\mathcal{X} = \mathbb{R}^n$. The authors of this article have developed the generalization independently from Murray et al. through similar techniques. Below, we cover key definitions and theorems for conditional SAGE. 
\begin{definition}(Conditional AGE Signomials).
    $\text{AGE}(\Av, \mathcal{X}, i)$ is the cone of signomials with at most one negative term at the $i$th index and is nonnegative over $\mathcal{X}$. Given $\Av \in \mathbb{R}^{\ell \times n}$, $\mathcal{X} \subset \mathbb{R}^n$ and $i \in [\ell]$, the $i$th AGE cone with respect to $\Av$ and $\mathcal{X}$ is:
    \begin{align*}
        \text{AGE}(\Av, \mathcal{X}, i) = \{\text{Sig}(\cv, \Av): \cv_{\backslash i} \geq 0 \mbox{ and } \text{Sig}(\cv, \Av)(\xv) \geq 0 \; \forall \xv \in \mathcal{X} \}.
    \end{align*}
\end{definition}
By definition, $\text{AGE}(\Av, \mathcal{X}, i)$ is a cone; it is easy to verify that it is closed under addition and nonnegative scaling. Fixed on a given set of exponent vectors $\Av$, convex set $\mathcal{X}$ and an index $i$, we may also define a set for the coefficients for which the resulting signomial is a conditional AGE signomial.
\begin{definition}(Conditional AGE Cone).
    $C_{AGE}(\Av, \mathcal{X}, i)$ is the cone of coefficients with at most one negative term at the $i$th index, such that the resulting signomial is nonnegative over $\mathcal{X}$. Given $\Av \in \mathbb{R}^{\ell \times n}$, $\mathcal{X} \subset \mathbb{R}^n$ and $i \in [\ell]$, the $i$th AGE coefficients with respect to $\mathcal{X}$ and $\Av$ is:
    \begin{align*}
        C_{AGE}(\Av, \mathcal{X}, i) = \{\cv \in \mathbb{R}^\ell: \cv_{\backslash i} \geq 0 \mbox{ and } \text{Sig}(\cv, \Av)(\xv) \geq 0 \; \forall \xv \in \mathcal{X} \}.
    \end{align*}
\end{definition}
Equipped with above definitions, we may define condtional SAGE signomials.
\begin{definition}(Conditional SAGE Signomials).
      $S\text{AGE}(\Av, \mathcal{X})$ is the Minkowski sum of $\text{AGE}(\Av, \mathcal{X}, i)$ for $i = 1 \ldots \ell$. 
    \begin{align*}
        \text{SAGE}(\Av, \mathcal{X}) &= \sum_{i=1}^\ell \text{AGE}(\Av, \mathcal{X}, i). 
    \end{align*}
    Fixed on a given set of exponent vectors $\Av$ and a convex set $\mathcal{X}$, a conditional SAGE signomial in $\text{SAGE}(\Av, \mathcal{X})$ is one with exponentials defined by $\Av$ and may be decomposed into parts, each being nonnegative over $\mathcal{X}$ and has at most one negative term occurring at different indices.
\end{definition}
Again, fixed on a given set of exponent vectors $\Av$, convex set $\mathcal{X}$, we may also define a set of coefficients for which the resulting signomials are conditional SAGE signomial.
\begin{definition}(Conditional SAGE Cone).
    $C_{\text{SAGE}}(\Av, \mathcal{X})$ is the Minkowski sum of \\ $C_{AGE}(\Av, \mathcal{X}, i)$ for $i = 1 \ldots \ell$. 
    \begin{align*}
        C_{\text{SAGE}}(\Av, \mathcal{X}) = \sum_{i=1}^\ell C_{AGE}(\Av, \mathcal{X}, i).
    \end{align*}
\end{definition}
By definition, $C_{\text{SAGE}}(\Av, \mathcal{X}) \subset C_{NNS}(\Av, \mathcal{X})$. \\\\
The following theorem shows that $C_{AGE}(\Av, \mathcal{X}, i)$ is a tractable set via convex constraints.    
\begin{theorem}\cite{MurrayChandrasekaranWierman19}.
\label{thm:cage_cone}
Given $\Av \in \mathbb{R}^{\ell \times n}$, $\mathcal{X} \subset \mathbb{R}^n$ and $i \in [\ell]$. Let $\sigma_\mathcal{X}(\lambdav) = \sup_{\xv \in \mathcal{X}} \lambdav^\top \xv$. Then: 
\begin{align*}
    C_{AGE}(\Av, \mathcal{X}, i) &= \{ \cv \in \mathbb{R}^\ell:  \exists \vv \in \mathbb{R}^{\ell-1} \mbox{ and } \lambdav \in \mathbb{R}^n \mbox{ such that } \\
    & \quad\quad \sigma_\mathcal{X}(\lambdav) + D(\vv, \cv_{\backslash i})  - \vv^\top \mathbf{1} \leq c_i  \\ 
    & \quad\quad [\Av_{\backslash i} - \mathbf{1}\Av_i]^\top \vv + \lambdav = \mathbf{0} \mbox{ and } \cv_{\backslash i} \geq \mathbf{0} \}.
\end{align*}
\end{theorem}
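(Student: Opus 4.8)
The plan is to prove the two inclusions separately: one is an immediate consequence of the Fenchel--Young inequality for $\exp$, while the other is a strong-duality statement. Write $\Bv := \Av_{\backslash i} - \mathbf{1}\Av_i$ for the $(\ell-1)\times n$ matrix whose rows are the vectors $\Av_j - \Av_i$ with $j \neq i$, and assume throughout (as in the rest of the paper) that $\mathcal{X}$ is a nonempty convex set. The reduction that drives everything is the identity
\[
\text{Sig}(\cv,\Av)(\xv) \;=\; \exp(\Av_i\xv)\,\bigl(c_i + g(\xv)\bigr), \qquad g(\xv) := \sum_{j\neq i} c_j \exp\bigl((\Av_j - \Av_i)\xv\bigr),
\]
which, since $\exp(\Av_i\xv) > 0$, shows that $\cv \in C_{AGE}(\Av,\mathcal{X},i)$ if and only if $\cv_{\backslash i} \geq \mathbf{0}$ and $\inf_{\xv\in\mathcal{X}} g(\xv) \geq -c_i$. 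Because $\cv_{\backslash i} \geq \mathbf{0}$, the function $g$ is convex and nonnegative, so proving the theorem amounts to computing $\inf_{\xv\in\mathcal{X}} g(\xv)$ by convex duality.

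For the inclusion $\supseteq$ (the elementary direction) I would argue directly. Suppose $\vv \geq \mathbf{0}$ and $\lambdav$ satisfy the two constraints of the claimed description, \ie $\Bv^\top\vv + \lambdav = \mathbf{0}$ and $\sigma_\mathcal{X}(\lambdav) + D(\vv,\cv_{\backslash i}) - \vv^\top\mathbf{1} \leq c_i$. Applying the Fenchel--Young inequality $c\exp(t) \geq v t - v\log(v/c) + v$, valid for every $v \geq 0$ (with the convention $0\log 0 = 0$, so a coefficient $c_j = 0$ forces $v_j = 0$), with $t = (\Av_j - \Av_i)\xv$ and summing over $j \neq i$ gives, for every $\xv \in \mathcal{X}$,
\begin{align*}
g(\xv) &\;\geq\; (\Bv^\top\vv)^\top\xv - D(\vv,\cv_{\backslash i}) + \vv^\top\mathbf{1} \;=\; -\lambdav^\top\xv - D(\vv,\cv_{\backslash i}) + \vv^\top\mathbf{1} \\
&\;\geq\; -\sigma_\mathcal{X}(\lambdav) - D(\vv,\cv_{\backslash i}) + \vv^\top\mathbf{1} \;\geq\; -c_i,
\end{align*}
where the equality uses $\Bv^\top\vv = -\lambdav$, the second inequality uses $\lambdav^\top\xv \leq \sigma_\mathcal{X}(\lambdav)$, and the last uses the hypothesis. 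Hence $c_i + g \geq 0$ on $\mathcal{X}$, and since $\cv_{\backslash i} \geq \mathbf{0}$ we conclude $\cv \in C_{AGE}(\Av,\mathcal{X},i)$.

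For the inclusion $\subseteq$, I would exhibit $\inf_{\xv\in\mathcal{X}} g(\xv)$ as the value of a Fenchel dual. Writing $g = h\circ\Bv$ with $h(\uv) = \sum_{j\neq i} c_j\exp(u_j)$, whose conjugate is the separable relative entropy $h^*(\vv) = D(\vv,\cv_{\backslash i}) - \vv^\top\mathbf{1}$ for $\vv \geq \mathbf{0}$ and $+\infty$ otherwise, the rule $g^*(\lambdav) = \inf\{h^*(\vv) : \Bv^\top\vv = \lambdav\}$ for the conjugate of a linear precomposition together with Fenchel's duality theorem give
\begin{align*}
\inf_{\xv\in\mathcal{X}} g(\xv) &\;=\; \sup_{\lambdav}\bigl\{ -\sigma_\mathcal{X}(-\lambdav) - g^*(\lambdav) \bigr\} \\
&\;=\; \sup_{\vv\geq\mathbf{0}}\bigl\{ -\sigma_\mathcal{X}(-\Bv^\top\vv) - D(\vv,\cv_{\backslash i}) + \vv^\top\mathbf{1} \bigr\},
\end{align*}
the second line obtained by substituting the formula for $g^*$ and eliminating $\lambdav$. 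If $\cv \in C_{AGE}(\Av,\mathcal{X},i)$, then by the reduction this supremum is $\geq -c_i$; taking a maximizing $\vv^* \geq \mathbf{0}$ and setting $\lambdav := -\Bv^\top\vv^*$ (so that $\Bv^\top\vv^* + \lambdav = \mathbf{0}$), the bound $-\sigma_\mathcal{X}(\lambdav) - D(\vv^*,\cv_{\backslash i}) + (\vv^*)^\top\mathbf{1} \geq -c_i$ rearranges into $\sigma_\mathcal{X}(\lambdav) + D(\vv^*,\cv_{\backslash i}) - (\vv^*)^\top\mathbf{1} \leq c_i$ — exactly the description in the statement.

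The main obstacle, and the only step needing genuine care, is making the duality of the previous paragraph rigorous for an \emph{arbitrary} convex set $\mathcal{X}$: one must show that there is no duality gap and that the dual supremum is attained, even when $\mathcal{X}$ is unbounded — so no minimax theorem with a compactness hypothesis applies verbatim — and even though the relative-entropy term is nonsmooth on the boundary of the nonnegative orthant. I expect to obtain the no-gap part from Fenchel's duality theorem, whose only hypothesis here, namely $\operatorname{ri}\mathcal{X} \neq \emptyset$, holds automatically for a nonempty convex $\mathcal{X}$; and the attainment from the coercivity of $\vv \mapsto D(\vv,\cv_{\backslash i})$ on the nonnegative orthant (it grows like $\|\vv\|\log\|\vv\|$, dominating the linear term $\vv^\top\mathbf{1}$), once one checks that the term $-\sigma_\mathcal{X}(-\Bv^\top\vv)$ does not destroy this on the effective domain of the dual objective. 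Should attainment fail in some degenerate configuration, one falls back on a closure argument mirroring the Fenchel-duality derivation of the ordinary unconditional SAGE cone. The remaining bookkeeping — vanishing coefficients, a lower-dimensional $\mathcal{X}$, and the base of the logarithm in $D$ (which only rescales constants, the statement being written in the natural-logarithm normalization) — is routine.
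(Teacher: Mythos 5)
Your proposal is correct and follows essentially the same route as the paper: reduce membership in $C_{AGE}(\Av,\mathcal{X},i)$ to $\inf_{\xv\in\mathcal{X}}\sum_{j\neq i}c_j\exp((\Av_j-\Av_i)\xv)\geq -c_i$ and apply Fenchel duality with the relative-entropy conjugate of the exponential sum. Your treatment is in fact slightly more careful than the paper's, which invokes strong duality without addressing dual attainment (needed for the existential form of the characterization) or the empty-$\mathcal{X}$ case that you set aside; both points resolve as you anticipate, e.g.\ via Rockafellar's Fenchel duality theorem, which yields attainment under the relative-interior condition you cite.
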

\begin{proof}
Let $\delta_\mathcal{X}(\cdot)$ denote the indicator function on set $\mathcal{X}$. A vector $\cv \in \mathbb{R}^\ell$ is in the $C_{AGE}(\Av, \mathcal{X}, i)$ cone if and only if:
\begin{align*}
    \sum_{j=1}^\ell c_j \exp (\Av_j\xv) \geq 0 \; \xv \in \mathcal{X} \mbox{ and } \cv_{\backslash i} \geq 0 \\
    \iff \sum_{1 \leq j \leq l, j \neq i} c_j \exp (\Av_j \xv) \geq - c_i \exp (\Av_i \xv) \; \forall \xv \in \mathcal{X} \mbox{ and } \cv_{\backslash i} \geq 0 \\
    \iff \sum_{1 \leq j \leq l, j \neq i} c_j \exp ((\Av_j - \Av_i) \xv) \geq - c_i \; \forall \xv \in \mathcal{X} \mbox{ and } \cv_{\backslash i} \geq 0 \\
    \iff p^* = \inf_{\xv \in \mathbb{R}^n} \; \delta_\mathcal{X}(\xv) +  \sum_{1 \leq j \leq l, j \neq i} c_j \exp ((\Av_j - \Av_i) \xv) \geq - c_i \mbox{ and } \cv_{\backslash i} \geq 0,
\end{align*}
where in the last step we replaced the constraint with an indicator function. Then we may apply Frenchel duality to the minimization problem. The resulting dual is:
\begin{align*}
    d^* = \sup_{\substack{\lambdav \in \mathbb{R}^n \\ \vv \in \mathbb{R}^{m-1} \\ (\Av_j - \Av_i)^\top \vv + \lambdav = \mathbf{0}}} - \sigma_\mathcal{X}(\lambdav) - D(\vv,\cv_{\backslash i}) + \vv^\top \mathbf{1},
\end{align*}
where $\sigma_\mathcal{X}(\cdot)$ is the support function of the set $\mathcal{X}$: $\sigma_\mathcal{X}(\lambdav) = \sup_{\xv \in \mathcal{X}} \lambdav^\top \xv$. When $\mathcal{X}$ is nonempty, strong duality holds by corollary 3.3.11 of \cite{Rockafellar70}. Consequently:
\begin{align*}
    p^* \leq -c_i \mbox{ and } c_i \geq 0 \iff -d^* \leq c_i \mbox{ and } \cv_{\backslash i} \geq 0,
\end{align*}
and we have the desired result. When $\mathcal{X}$ is empty, $p^* = +\infty$ by definition of indicator function. By choosing $\vv, \lambdav = \mathbf{0}$, we have $d^* = \infty$ and desired result also follows.
\end{proof}  \newline
As corollary, $C_{\text{SAGE}}$ is characterized by the following set of convex constraints:
\begin{corollary}
    Given $\Av \in \mathbb{R}^{\ell \times n}$, $\mathcal{X} \subset \mathbb{R}^n$ and $i \in [\ell]$. Then: 
\begin{align*}
    C_{\text{SAGE}}(\Av, \mathcal{X}) &= \{ \cv \in \mathbb{R}^\ell:  \exists \cv^{(i)} \in \mathbb{R}^{\ell}, \vv^{(i)} \in \mathbb{R}^{\ell-1} \mbox{ and } \lambdav^{(i)} \in \mathbb{R}^n  \mbox{ such that } \\
    & \quad\quad \sum_{j=1}^\ell \cv^{(j)} = \cv \;, \sigma_\mathcal{X}(\lambdav^{(i)}) + D(\vv^{(i)}, \cv^{(i)}_{\backslash i})  - \vv^{(i) \top} \mathbf{1} \leq c_i, \\ 
    & \quad\quad [\Av_{\backslash i} - \mathbf{1}\Av_i]^\top \vv^{(i)} + \lambdav^{(i)} = \mathbf{0} \mbox{ and } \cv^{(i)}_{\backslash i} \geq \mathbf{0} \mbox{ for } i = 1 \ldots \ell \}.
\end{align*}
\end{corollary}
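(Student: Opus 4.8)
The plan is to read off the claimed convex description of $C_{\text{SAGE}}(\Av,\mathcal{X})$ directly from its definition as the Minkowski sum $\sum_{i=1}^{\ell} C_{AGE}(\Av,\mathcal{X},i)$ together with the single-cone characterization of Theorem~\ref{thm:cage_cone}. By definition of the Minkowski sum, $\cv \in C_{\text{SAGE}}(\Av,\mathcal{X})$ holds if and only if there exist $\cv^{(1)},\dots,\cv^{(\ell)} \in \mathbb{R}^{\ell}$ with $\sum_{i=1}^{\ell}\cv^{(i)} = \cv$ and $\cv^{(i)} \in C_{AGE}(\Av,\mathcal{X},i)$ for every $i$. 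The remaining work is to substitute into each of the $\ell$ membership conditions $\cv^{(i)} \in C_{AGE}(\Av,\mathcal{X},i)$ the equivalent convex system furnished by Theorem~\ref{thm:cage_cone}, applied with $\cv^{(i)}$ in the role of the coefficient vector. For each $i$ this introduces auxiliary variables $\vv^{(i)} \in \mathbb{R}^{\ell-1}$ and $\lambdav^{(i)} \in \mathbb{R}^{n}$, the linear equation $[\Av_{\backslash i} - \mathbf{1}\Av_i]^{\top}\vv^{(i)} + \lambdav^{(i)} = \mathbf{0}$, the sign condition $\cv^{(i)}_{\backslash i} \geq \mathbf{0}$, and a relative-entropy inequality controlling $\sigma_\mathcal{X}(\lambdav^{(i)}) + D(\vv^{(i)},\cv^{(i)}_{\backslash i}) - \vv^{(i)\top}\mathbf{1}$; collecting these $\ell$ systems together with the coupling constraint $\sum_i \cv^{(i)} = \cv$ yields exactly the set on the right-hand side of the corollary.

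I would spell this out as two inclusions. For ``$\subseteq$'', given $\cv = \sum_i \cv^{(i)}$ with $\cv^{(i)} \in C_{AGE}(\Av,\mathcal{X},i)$, Theorem~\ref{thm:cage_cone} supplies the required $(\vv^{(i)},\lambdav^{(i)})$; moreover, since $\cv^{(j)}_{\backslash j} \geq \mathbf{0}$ forces $\cv^{(j)}_i \geq 0$ for every $j \neq i$, the coupling constraint gives $\cv^{(i)}_i \leq \sum_{j}\cv^{(j)}_i = c_i$, so the bound that Theorem~\ref{thm:cage_cone} produces against $\cv^{(i)}_i$ can be relaxed to the displayed one with $c_i$ on the right. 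For ``$\supseteq$'', one runs the same substitution in reverse, using Theorem~\ref{thm:cage_cone} to pass from the listed constraints back to $\cv^{(i)} \in C_{AGE}(\Av,\mathcal{X},i)$ and concluding $\cv = \sum_i \cv^{(i)} \in \sum_i C_{AGE}(\Av,\mathcal{X},i) = C_{\text{SAGE}}(\Av,\mathcal{X})$. In short, the corollary is obtained by invoking the $C_{AGE}$ theorem once per Minkowski summand and bookkeeping the resulting variables and constraints.

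No new analytic difficulty arises: Theorem~\ref{thm:cage_cone} has already absorbed the Fenchel/strong-duality argument, and the corollary merely repackages it $\ell$ times. The step I expect to need the most care is the index appearing on the right-hand side of the $i$-th relative-entropy inequality: it arises naturally as the $i$-th coordinate of the $i$-th Minkowski summand $\cv^{(i)}$, and one must track, via the nonnegativity of the off-diagonal coordinates of the remaining summands and the coupling $\sum_i \cv^{(i)} = \cv$, how it corresponds to the $i$-th coordinate $c_i$ of the full vector $\cv$. Once that correspondence is established, the proof reduces to a termwise application of Theorem~\ref{thm:cage_cone}.
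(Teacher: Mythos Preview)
Your approach---unpack the Minkowski-sum definition of $C_{\text{SAGE}}(\Av,\mathcal{X})$ and apply Theorem~\ref{thm:cage_cone} once to each summand---is exactly what the paper intends; the result is stated there as an immediate corollary with no separate argument. The subtlety you flag about the right-hand side $c_i$ versus $c^{(i)}_i$ is real, but it is almost certainly a typographical slip in the displayed statement: the paper's own expanded description of the $p$-th level cone $C_{\text{SAGE}}^{(p)}(\Av,\mathcal{X})$ a few lines later carries $c^{(i)}_i$ in the analogous inequality, and with that reading both inclusions are mechanical consequences of Theorem~\ref{thm:cage_cone} and your bookkeeping is complete.

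If one insists on the printed $c_i$, your ``$\subseteq$'' argument is correct (the relaxation $c^{(i)}_i \le c_i$ indeed follows from $\cv^{(j)}_{\backslash j}\ge \mathbf{0}$ together with $\sum_j \cv^{(j)} = \cv$), but your ``$\supseteq$'' step has a gap: you cannot simply ``run the substitution in reverse,'' because Theorem~\ref{thm:cage_cone} applied to $\cv^{(i)}$ demands the bound against $c^{(i)}_i$, and the inequality $c^{(i)}_i \le c_i$ points the wrong way to recover it from the weaker displayed constraint $\le c_i$. Establishing membership of the given $\cv^{(i)}$ in $C_{AGE}(\Av,\mathcal{X},i)$ therefore does not follow, and one would instead have to manufacture a different decomposition $\tilde{\cv}^{(i)}$ (and possibly different auxiliaries) satisfying the tighter inequalities---which is not obviously always possible. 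The clean resolution is to read $c^{(i)}_i$ in place of $c_i$; with that correction your plan matches the paper and goes through verbatim.
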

There are $O(\ell n)$ constraints defined by $O(\ell (\ell + n))$ variables. The complexity of the problem does increase depending on both the dimension of input to the signomial and the number of terms, but it is highly tractable.  
\subsubsection{SAGE Hierarchy}
$C_{\text{SAGE}}(\Av, \mathcal{X})$ is an inner approximation of the $C_{NNS}(\Av, \mathcal{X})$ cone. We may however find larger and more accurate inner approximation through what is known as \textit{modulation}. Define the hierarchy of SAGE cones as: 
\begin{definition}
    Given $\Av \in \mathbb{R}^{\ell \times n}$, $\mathcal{X} \subset \mathbb{R}^n$, the $p$th level SAGE cone with respect to $\Av$ and $\mathcal{X}$ is:
    \begin{align*}
        C_{\text{SAGE}}^{(p)}(\Av, \mathcal{X}) &= \{\cv \in \mathbb{R}^\ell: (\exp(\Av \xv)^\top \mathbf{1})^p \text{Sig}(\cv, \Av) \in S\text{AGE}(E_{p+1}(\Av), \mathcal{X}) \}.
    \end{align*}
\end{definition}
That is, instead of certifying a given signomial to be positive, we verify whether its product with a positive definite function is positive. Multiplication of a positive definite function does not change positivity, and thus the test is valid. Equivalently, expanding the above definition and paying attention to the coefficients of the larger signomial after multiplication of the extra function give:
\begin{align*}
    C_{\text{SAGE}}^{(p)}(\Av, \mathcal{X}) &= \{ \cv \in \mathbb{R}^\ell: \exists \cv^{(i)} \in \mathbb{R}^{\ell^{(p+1)}}, \vv^{(i)} \in \mathbb{R}^{\ell^{(p+1)}-1}, \Tilde{\Av} \in \mathbb{R}^{\ell^{(p+1)} \times n} \mbox{ and } \lambdav^{(i)} \in \mathbb{R}^n  \\
    & \quad\quad \mbox{ such that } \Tilde{\Av}_{j_1 + \ell \cdot j_2 + \ldots + \ell^p \cdot j_{p+1}} = \Av_{j_1} + \ldots + \Av_{j_\ell} \mbox{ for } j_1 \ldots j_{p+1} = 1 \ldots \ell, \\
& \quad\quad c^{(i)}_{k + 0 \cdot \ell} = c^{(i)}_{k + 1 \cdot \ell} = \ldots = c^{(i)}_{k + \ell^{(p+1)}-\ell } \; \forall k = 1 \ldots \ell, \;\; \sum_{j=1}^{\ell^{(p+1)}} \cv^{(j)}_{1:\ell} = \cv, \\
& \quad\quad \sigma_\mathcal{X}(\lambdav^{(i)}) + D(\vv^{(i)}, \cv^{(i)}_{\backslash i})  - \vv^{(i) \top} \mathbf{1} \leq c^{(i)}_i,  \\ 
& \quad\quad [\Tilde{\Av}_{\backslash i} - \mathbf{1}\Tilde{\Av}_i]^\top \vv^{(i)} + \lambdav^{(i)} = \mathbf{0} \mbox{ and } \cv^{(i)}_{\backslash i} \geq \mathbf{0} \; \mbox{ for } i = 1 \ldots \ell^{(p+1)} \}.
\end{align*}
A more careful construction can reduce some of the constraints. In particular, the equality constraints on $\cv^{(i)}$ are simply to construct a large vector with repeated sub-vectors, and the constraints on $\Tilde{\Av}$ simply denote that its rows are linear combinations of rows of $\Av$. In implementation, we may avoid such constraints by recycling variables. Ignoring such constraints, at the $p$th level of the hierarchy, there are $O(\ell^{(p+1)} n)$ constraints defined by $O(\ell^{(p+1)} (\ell^{(p+1)} + n))$ variables. Upon setting $p = 0$, we recover $ C_{\text{SAGE}}(\Av, \mathcal{X})$. Moreover, we have the following relations. 
\begin{theorem}
\label{thm:sage_p_inequality}
    Given $\Av \in \mathbb{R}^{\ell \times n}$, $\mathcal{X} \subset \mathbb{R}^n$, and any $p \in \mathbb{Z}_{++}$, 
    \begin{align*}
        C_{\text{SAGE}}^{(p)}(\Av, \mathcal{X}) \subseteq C_{\text{SAGE}}^{(p+1)}(\Av, \mathcal{X}) \subseteq C_{NNS}(\Av, \mathcal{X}).
    \end{align*}
\end{theorem}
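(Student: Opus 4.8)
The plan is to isolate the one piece of actual content: the cone $\text{SAGE}(\cdot,\mathcal{X})$ is closed, up to enlarging the exponent matrix, under multiplication by a signomial with nonnegative coefficients. Granting this, both inclusions are immediate. For the right inclusion, if $\cv\in C_{\text{SAGE}}^{(p+1)}(\Av,\mathcal{X})$ then $(\exp(\Av\xv)^\top\mathbf{1})^{p+1}\,\text{Sig}(\cv,\Av)$ is a sum of conditional AGE signomials, each nonnegative on $\mathcal{X}$ by definition, hence itself nonnegative on $\mathcal{X}$; dividing by the strictly positive factor $(\exp(\Av\xv)^\top\mathbf{1})^{p+1}$ yields $\text{Sig}(\cv,\Av)\geq 0$ on $\mathcal{X}$, i.e.\ $\cv\in C_{NNS}(\Av,\mathcal{X})$. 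For the left inclusion I would write the higher modulator as $(\exp(\Av\xv)^\top\mathbf{1})^{p+1}=(\exp(\Av\xv)^\top\mathbf{1})\cdot(\exp(\Av\xv)^\top\mathbf{1})^{p}$ and multiply the given membership $(\exp(\Av\xv)^\top\mathbf{1})^{p}\,\text{Sig}(\cv,\Av)\in\text{SAGE}(E_{p+1}(\Av),\mathcal{X})$ by the extra factor $\exp(\Av\xv)^\top\mathbf{1}=\sum_{j=1}^{\ell}\exp(\Av_j\xv)$.

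To make the closure property precise I would first establish it for a single AGE cone: for any exponent matrix $\Bv$, index $i$, and $\av\in\mathbb{R}^{n}$, if $g\in\text{AGE}(\Bv,\mathcal{X},i)$ then $\exp(\av^\top\xv)\,g\in\text{AGE}(\Bv+\mathbf{1}\av^\top,\mathcal{X},i)$. This is a direct check: multiplying by a positive monomial leaves the coefficient vector — hence its sign pattern and the location $i$ of the unique possibly-negative entry — unchanged, shifts every exponent vector by $\av$, and preserves nonnegativity over $\mathcal{X}$ since $\exp(\av^\top\xv)>0$. Together with the trivial monotonicity $\text{AGE}(\Bv,\mathcal{X},i)\subseteq\text{AGE}(\Cv,\mathcal{X},i)$ whenever the rows of $\Bv$ occur among the rows of $\Cv$ (set the new coefficients to zero), this gives: if $g\in\text{SAGE}(\Bv,\mathcal{X})$, decomposed as $g=\sum_i g_i$ with $g_i\in\text{AGE}(\Bv,\mathcal{X},i)$, and $h$ is a signomial with nonnegative coefficients and exponent vectors $\av_1,\dots,\av_s$, then expanding $h$ term by term gives $gh=\sum_{i,k}(\text{monomial}_k)\,g_i$, where each summand lies in an AGE cone whose exponent vectors have the form $\Bv_j+\av_k$; hence $gh\in\text{SAGE}(\Cv,\mathcal{X})$ for any matrix $\Cv$ whose rows include all such sums.

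Applying this with $\Bv=E_{p+1}(\Av)$, $g=(\exp(\Av\xv)^\top\mathbf{1})^{p}\,\text{Sig}(\cv,\Av)$, and $h=\exp(\Av\xv)^\top\mathbf{1}$ (unit, hence nonnegative, coefficients; exponent vectors $\Av_1,\dots,\Av_\ell$), it remains only to verify that every sum $\Bv_j+\Av_k$ is a row of $E_{p+2}(\Av)$. Indeed each row of $E_{p+1}(\Av)$ is an integer combination $\sum_i w_i\Av_i$ of rows of $\Av$ with $\sum_i w_i\leq p+1$, and adding one further row $\Av_k$ raises the weight sum by exactly one, producing an integer combination with weight sum at most $p+2$, i.e.\ a row of $E_{p+2}(\Av)$. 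Therefore $gh=(\exp(\Av\xv)^\top\mathbf{1})^{p+1}\,\text{Sig}(\cv,\Av)\in\text{SAGE}(E_{p+2}(\Av),\mathcal{X})$, which is exactly $\cv\in C_{\text{SAGE}}^{(p+1)}(\Av,\mathcal{X})$.

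I expect the main obstacle, such as it is, to be purely bookkeeping: tracking that the index of the single negative monomial survives multiplication by a positive monomial, and checking that the shifted exponent vectors remain inside $E_{p+2}(\Av)$ under the precise definition of the $E_p(\cdot)$ operator. Everything else reduces to the strict positivity of the modulating signomial $\exp(\Av\xv)^\top\mathbf{1}$ and the monotonicity of the AGE cones under enlarging the list of allowed exponent vectors.
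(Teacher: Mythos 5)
Your proposal is correct and follows essentially the same route as the paper: split the extra modulating factor $\exp(\Av\xv)^\top\mathbf{1}$ into its $\ell$ exponential terms, observe that multiplying an AGE signomial by a single positive exponential preserves nonnegativity and the one-negative-term structure while shifting exponents into $E_{p+2}(\Av)$, and conclude by closure of the (S)AGE cones under addition; the reverse inclusion via positivity of the modulator is also identical. Your version is slightly more careful than the paper's in tracking where the negative coefficient lands after the shift (landing in possibly different AGE cones, hence in the Minkowski sum), but this is the same argument.
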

\begin{proof}
Let $\cv \in C_{\text{SAGE}}^{(p)}(\Av, \mathcal{X})$. By definition there exists $\cv^{(1)} \ldots \cv^{(\ell)}$ such that $\cv = \sum_{i=1}^\ell \cv^{(i)}$ \\ and $(\exp(\Av \xv)^\top \mathbf{1})^p  \text{Sig}(\cv^{(i)}, \Av) \in \text{AGE}(E_{p+1}(\Av), \mathcal{X}, i)$. Now:
\begin{align*}
    (\exp(\Av \xv)^\top \mathbf{1})^{(p+1)}  \text{Sig}(\cv^{(i)}, \Av) &= \sum_{j=1}^\ell \exp(\Av_j\xv) (\exp(\Av \xv)^\top \mathbf{1})^p  \text{Sig}(\cv^{(i)}, \Av) \\
    &\in \text{AGE}(E_{p+2}(\Av), \mathcal{X}, i).
\end{align*}
To see the membership in $\text{AGE}(E_{p+2}(\Av), \mathcal{X}, i)$, for each $j$, $\exp(\Av_j\xv) (\exp(\Av \xv)^\top \mathbf{1})^p \text{Sig}(\cv^{(i)}, \Av)  \\ \in \text{AGE}(E_{p+2}(\Av), \mathcal{X}, i)$ given $\exp(\Av \xv)^\top \mathbf{1})^p  \text{Sig}(\cv^{(i)}, \Av) \in \text{AGE}(E_{p+1}(\Av), \mathcal{X}, i)$ since multiplication of one exponential term does not change positivity, nor the condition $\cv_{\backslash i} \geq 0$. In addition, $\text{AGE}(E_{p+2}(\Av), \mathcal{X}, i)$ is closed under addition. Then: 
\begin{align*}
    \cv^{(i)} \in C_{AGE}^{(p+1)}(\Av, \mathcal{X}, i) \implies \cv \in C_{\text{SAGE}}^{(p+1)}(\Av, \mathcal{X}).
\end{align*}
Conversely, let $\cv \in C_{\text{SAGE}}^{(p+1)}(\Av, \mathcal{X})$. By definition, $(\exp(\Av \xv)^\top \mathbf{1})^{(p+1)} \text{Sig}(\cv, \Av) \\ \in S\text{AGE}(E_{p+1}(\Av), \mathcal{X})$. Then:
\begin{align*}
    (\exp(\Av \xv)^\top \mathbf{1})^{(p+1)} \text{Sig}(\cv, \Av)(\xv) \geq 0 \; \forall \xv \in \mathcal{X} \iff & \text{Sig}(\cv, \Av)(\xv) \geq 0 \; \forall \xv \in \mathcal{X} \\ \implies & \cv \in C_{NNS}(\Av, \mathcal{X}).
\end{align*} 
\end{proof} \newline
The above shows that the hierarchy of SAGE cones provide an increasingly accurate inner approximation of nonnegative signomials.
\subsubsection{A Toy Example}
Below is a toy example that illustrates \cref{thm:sage_p_inequality} concretely. Let $\cv = [1, -1, -1]^\top$ and $\Av = [\av_1, \av_2, \av_3]^\top$ where $\av_1 = [1.5, 0]^\top, \av_2 = [0, 1]^\top, \av_3 = [0, -1]$. Consider the signomial $\text{Sig}(\cv, \Av): \mathbb{R}^2 \to \mathbb{R}$:
\begin{align*}
    \text{Sig}(\cv, \Av)(\xv) = \exp(1.5x_1) - \exp(x_2) - \exp(-x_2),
\end{align*}
and the convex set:
\begin{align*}
    \mathcal{X} = \{\xv: x_1 = 1, \; -1 \leq x_2 \leq 1 \}.
\end{align*}
One may verify that $\text{Sig}(\cv, \Av)(\xv) > 0 \; \forall \xv \in \mathcal{X}$ (hint: the two negative terms are minimized at different locations in $\mathcal{X}$.) However, $\text{Sig}(\cv, \Av) \notin \text{SAGE}(\Av, \mathcal{X}) \iff \cv \notin C_{\text{SAGE}}(\Av, \mathcal{X})$. To see this, recall that a SAGE signomial must have a decomposition under which each summand has at most one negative term and is nonnegative over the convex set $\mathcal{X}$. Since there are two negative terms in $\text{Sig}(\cv, \Av)$, the only possible decomposition satisfying such conditions is one consisting of two summands, each including one the two negative terms $-\exp(x_2)$ and $-\exp(-x_2)$. The two terms both attain $-e$ over $\mathcal{X}$, but the singular positive term $\exp(1.5x_1)$ of $\text{Sig}(\cv, \Av)$ is constant over $\mathcal{X}$ at $e^{1.5} < 2e$. Thus it cannot be distributed against the two negative terms such that the two summands are both nonnegative over $\mathcal{X}$. \\\\
On the other hand:
\begin{align*}
    (\exp(\Av \xv)^\top \mathbf{1}) \text{Sig}(\cv, \Av)(\xv) &= (\exp(1.5x_1) + \exp(x_2) + \exp(-x_2)) f(\xv) \\
                                                &= \exp(3x_1) - \exp(2x_2) - \exp(-2x_2) \\
                                                &= (0.5\exp(3x_1) - \exp(2x_2)) + (0.5\exp(3x_1) - \exp(-2x_2)).
\end{align*}
One may check that $0.5\exp(3x_1) - \exp(2x_2)$ and $0.5\exp(3x_1) - \exp(-2x_2)$ are both positive over $\mathcal{X}$. Thus, $(\exp(\Av \xv)^\top \mathbf{1}) \text{Sig}(\cv, \Av)(\xv) \in \text{SAGE}(E_2(\Av), \mathcal{X}) \iff \cv \in C^{(1)}_{\text{SAGE}}(\Av, \mathcal{X})$. Thus, $C_{\text{SAGE}}(\Av, \mathcal{X}) \subset C^{(1)}_{\text{SAGE}}(\Av, \mathcal{X})$ where the set relation is strict. In this toy example, the technique of modulation allows certification of a signomial positive over a constrained set where the direct SAGE method would fail otherwise.

\subsubsection{SAGE Relaxation}
Given hierarchy of SAGE cones, we may formulate a hierarchy of relaxations for signomial optimization. Consider signomial $\text{Sig}(\cv, \Av)$ and a convex set $\mathcal{X}$, let $f^* = \inf_{\xv \in \mathcal{X}} \text{Sig}(\cv, \Av)(\xv)$. The relaxation may be formulated as follows.
\begin{align*}
    f^{(p)}_{SAGE} &= \sup_{\lambda} \lambda \mbox{ s.t. } \text{Sig}(\cv, \Av) - \lambda \in SAGE^{(p)}(\Av, \mathcal{X}) \\
    &= \sup_{\lambda} \lambda \mbox{ s.t. } \cv - \lambda[1, 0, \ldots] \in C_{SAGE}^{(p)}(\Av, \mathcal{X}).
\end{align*}
\Cref{thm:sage_p_inequality} tells us that the conditional SAGE cones are increasingly tighter inner approximations of the set of signomials that are positive over a given convex set. As result, we have that $f^{(p)}_{SAGE} \leq f^{(p+1)}_{SAGE} \leq f^*$.

\subsection{Positivstellensatz}
In this subsection we highlight some classical Positivstellensatz results to contrast with the main result of the article. \\\\
The first Positivstellensatz is due to Artin in 1927, in response to Hilbert's 17th problem. 
\begin{theorem}(Artin’s Positivstellensatz \cite{Artin16})
Consider a polynomial $p(\xv)$. If $p(\xv)$ is globally positive, there there exists a nonzero SOS polynomial $q(\xv)$ such that $q(\xv)p(\xv)$ is a SOS polynomial. 
\end{theorem}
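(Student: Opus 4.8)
The plan is to reduce the statement to the Artin--Schreier theory of real closed fields by a standard proof-by-contradiction argument, using the classical trick of clearing denominators. First I would observe that the asserted form ``$q(\xv)p(\xv)$ is SOS for some nonzero SOS polynomial $q$'' is equivalent, after dividing by $q$, to the statement that $p$ is a sum of squares of \emph{rational} functions: indeed if $p=\sum_k (a_k/b)^2$ with a common denominator $b\in\mathbb{R}[\xv]\setminus\{0\}$, then $b^2 p=\sum_k a_k^2$ is SOS and $q:=b^2$ works; conversely $qp=\sum_k r_k^2$ gives $p=\sum_k (r_k/s)^2$ after expressing $q=s^2\cdot(\text{SOS})$ and pulling the square root of the remaining SOS factor into the rationals. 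So it suffices to prove: if $p\ge 0$ on all of $\mathbb{R}^n$, then $p$ lies in the cone $\Sigma$ of sums of squares of the field $\mathbb{R}(\xv)=\mathbb{R}(x_1,\dots,x_n)$.

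Next I would argue by contradiction: suppose $p\notin\Sigma$. The set $\Sigma$ is a proper preordering of the field $\mathbb{R}(\xv)$ --- it contains every square, is closed under addition and multiplication, and $-1\notin\Sigma$ since $\mathbb{R}(\xv)$ is formally real. A standard fact about preorderings (proved via Zorn's lemma on the family of preorderings not containing $-p$) then yields an \emph{ordering} $P$ of the field $\mathbb{R}(\xv)$ with $p<_P 0$. Because $\mathbb{R}$ is already real closed, hence uniquely ordered, the ordering $P$ necessarily restricts to the usual order on the coefficient field $\mathbb{R}$.

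Then I would take a real closure $R$ of the ordered field $(\mathbb{R}(\xv),P)$, so that $R$ is a real closed field with $\mathbb{R}\subseteq\mathbb{R}(\xv)\subseteq R$, and the images $\xi_1,\dots,\xi_n\in R$ of the coordinate functions satisfy $p(\xi_1,\dots,\xi_n)<_P 0$. Thus $R\models \exists\,\xv\ \bigl(p(\xv)<0\bigr)$, an existential first-order sentence whose parameters (the coefficients of $p$) lie in $\mathbb{R}$. By the model completeness of the theory of real closed fields (Tarski), the inclusion $\mathbb{R}\hookrightarrow R$ of real closed fields is an elementary embedding, so this sentence transfers downward: $\mathbb{R}\models\exists\,\xv\ \bigl(p(\xv)<0\bigr)$, i.e.\ $p$ takes a negative value on $\mathbb{R}^n$, contradicting the hypothesis. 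Hence $p\in\Sigma$, and clearing denominators as above produces the required nonzero SOS $q$; the degenerate case $p\equiv 0$ is handled by $q=1$.

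The main obstacle is entirely in the two algebraic ingredients: extending the sum-of-squares preordering of $\mathbb{R}(\xv)$ to a genuine field ordering, and the transfer (Tarski) principle that lets one ``specialize'' the point $(\xi_i)$ from the abstract real closed field $R$ back to $\mathbb{R}$. Everything else --- the passage to rational functions and the clearing of denominators --- is bookkeeping. If one wishes to avoid invoking model theory, the transfer step can be replaced by an explicit appeal to Sturm's theorem together with the Artin--Schreier characterization of real closed fields, but this is precisely the delicate part of the argument and is where the bulk of the care is needed.
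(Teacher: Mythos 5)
The paper does not actually prove this statement --- it is quoted as classical background with a citation to Artin's original work --- so there is no in-paper argument to compare against. Your proposal is the standard Artin--Schreier/Tarski solution of Hilbert's 17th problem, and it is essentially correct: the reduction to expressing $p$ as a sum of squares of rational functions, the extension of the proper preordering $\Sigma\subset\mathbb{R}(\xv)$ to a field ordering in which $p$ is negative, the passage to a real closure $R$, and the downward transfer of the existential sentence by model completeness of real closed fields are all sound, as is the remark that the ordering must restrict to the unique ordering of $\mathbb{R}$. Two small points of care. First, your parenthetical description of the Zorn's-lemma step is backwards: one applies Zorn to the family of \emph{proper preorderings containing} $-p$ (first checking that $\Sigma - p\Sigma$ is proper precisely because $p\notin\Sigma$, via the identity $a=(1+t_1)t_2/t_2^2$ if $-1=t_1-at_2$), and a maximal element of that family is an ordering; the lemma you actually need is correct, but as written the family is described incorrectly. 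Second, the converse direction of your clearing-of-denominators equivalence is cleaner without factoring $q$: from $qp=\sum_k r_k^2$ and $q=\sum_j s_j^2$ one gets $p=(qp)\,q/q^2=\sum_{k,j}(r_k s_j/q)^2$ directly. Neither issue affects the validity of the overall argument.
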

Verification of a polynomial as sum of squares, as in the above theorem, may be reformulated as a SDP, as studied extensively by Lassere and Perrilo \cite{Parrilo00, Lasserre01}. The theorem is the basis for certifying positivity of a polynomial, and a hierarchy may be induced by limiting the degree of $q(\xv)$ to search over. \\\\
The following result due to Polya in 1928 provides an \textit{optimization-free} Positivstellensantz. That is, verification of positivity is easy at some sufficient level on the hierarchy.  
\begin{theorem}(Polya's Positivstellensatz \cite{Polya28})
Consider a polynomial $p(\xv)$ whose terms have even degrees. If $p(\xv)$ is globally positive, then there exists some $p \in \mathbb{Z}_+$ such that $(\sum_{i=1}^n \xv^2_i)^p p(\xv)$ is a polynomial with only positive coefficients. 
\end{theorem}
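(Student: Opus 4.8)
The plan is to reduce the statement to Polya's classical theorem in its homogeneous form: if a form (homogeneous polynomial) $F$ is strictly positive on the standard simplex $\Delta_n = \{\xv \in \mathbb{R}^n : \xv \geq \mathbf{0},\ \mathbf{1}^\top \xv = 1\}$, then $(\mathbf{1}^\top \xv)^N F(\xv)$ has only positive coefficients for every sufficiently large $N$. Given the polynomial $p(\xv)$ of the statement, whose monomials all have even degree and which is globally positive, I would first homogenize and move to the nonnegative orthant: grouping $p = \sum_k P_{2k}$ into its homogeneous pieces and inserting powers of $\sum_i x_i^2$ to equalize the degrees produces a form $\hat p$ that agrees with $p$ on the unit sphere, and after the substitution $x_i \mapsto \sqrt{y_i}$ an honest polynomial $q$ on the orthant $\yv \geq \mathbf{0}$. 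Global strict positivity of $p$ is what forces the associated form to be strictly positive on all of $\Delta_n$, including its boundary faces. Applying homogeneous Polya to that form and undoing the substitution converts the multiplier $(\mathbf{1}^\top \yv)^N$ back into $(\sum_i x_i^2)^N$, which is the asserted denominator.

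The analytic core is then the homogeneous statement, which I would prove by a Bernstein-approximation argument. Writing $F = \sum_{|\alpha| = m} c_\alpha \xv^\alpha$ and expanding $(\mathbf{1}^\top \xv)^N F(\xv) = \sum_{|\beta| = m + N} b_\beta \xv^\beta$, the multinomial theorem gives $b_\beta = \sum_{\alpha \leq \beta,\ |\alpha| = m} \binom{N}{\beta - \alpha}\, c_\alpha$. Dividing through by the degree-$(m+N)$ multinomial coefficient $\binom{m+N}{\beta}$ and rearranging, $b_\beta / \binom{m+N}{\beta}$ equals, up to a factor tending to a positive constant as $N \to \infty$, a convex average of the values of $F$ at the rational points $\beta/(m+N) \in \Delta_n$ --- that is, a Bernstein evaluation of $F$ on the simplex. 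Since $F$ is a polynomial it is Lipschitz on the compact set $\Delta_n$, so this approximation is \emph{uniform} in $\beta$: there is an error $o(1)$, independent of $\beta$, with $b_\beta / \binom{m+N}{\beta} = F(\beta/(m+N)) + o(1)$.

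To finish I would invoke compactness once more: $F$ is continuous and strictly positive on the compact $\Delta_n$, hence bounded below there by some $\varepsilon > 0$, so the uniform estimate forces $b_\beta \geq \tfrac{1}{2}\varepsilon \binom{m+N}{\beta} > 0$ for every $\beta$ once $N$ is large enough, which is exactly the conclusion. I expect the main obstacle to lie not in this analytic core but in the reduction of the first paragraph: one has to check that homogenizing a globally positive even polynomial and passing to the orthant truly yields a form strictly positive on \emph{every} face of $\Delta_n$, since a globally positive polynomial may have its top-degree part vanish away from the coordinate subspaces. Ensuring strict positivity on the boundary is the delicate point, and it is precisely what makes an arbitrarily high power of $\sum_i x_i^2$ necessary rather than a single homogenization.
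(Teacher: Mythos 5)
The paper states this result as classical background and gives no proof of its own, so I can only assess your argument on its merits. Your analytic core is the standard proof of Polya's homogeneous theorem: normalizing the coefficients $b_\beta$ of $(\mathbf{1}^\top\xv)^N F(\xv)$ by multinomial coefficients exhibits them as Bernstein-type evaluations of $F$ at the grid points $\beta/(m+N)$ of $\Delta_n$, and uniform convergence plus a positive lower bound on the compact simplex finishes it. Moreover, the boundary issue you flag at the end resolves itself: if $\hat p$ is positive on the unit sphere, then $G(\yv):=\hat p(\sqrt{y_1},\ldots,\sqrt{y_n})$ is positive at \emph{every} point of $\Delta_n$, boundary faces included, since every $\yv\in\Delta_n$ equals $(x_1^2,\ldots,x_n^2)$ for some unit vector $\xv$.

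The genuine gap is in the reduction of your first paragraph. Homogenizing $p=\sum_k P_{2k}$ into $\hat p(\xv)=\sum_k P_{2k}(\xv)\,(\sum_i x_i^2)^{d-k}$ produces a \emph{different} polynomial, and positivity of the coefficients of $(\sum_i x_i^2)^N\hat p$ says nothing about the coefficients of $(\sum_i x_i^2)^{N'}p$, which is what the theorem asserts; multiplication by powers of $\sum_i x_i^2$ does not commute with your degree equalization. No repair is possible, because the statement is false without homogeneity: take $n=1$ and $p(x)=x^4-x^2+1=(x^2-\tfrac12)^2+\tfrac34>0$; then $(x^2)^N p(x)=x^{2N+4}-x^{2N+2}+x^{2N}$ has a negative coefficient for every $N$, while the homogenization collapses to $x^4-x^2\cdot x^2+(x^2)^2=x^4$, which is exactly how your reduction loses the offending term. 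The theorem is true, and your second and third paragraphs prove it, precisely when $p$ is an even \emph{form} --- homogeneous with every individual exponent even; the latter condition is also what your substitution $x_i\mapsto\sqrt{y_i}$ silently requires (a term like $x_1x_2$ has even total degree but becomes $\sqrt{y_1y_2}$, not a polynomial). So the correct move is not to reduce the paper's statement to the homogeneous case but to recognize that the hypothesis as printed is too weak and that homogeneity must be assumed.
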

The above two results concern polynomials that are globally positive. The following result from 1991 characterizes polynomials that are positive over semialgebraic sets. 
\begin{theorem}(Schmudgen's Positivstellensatz \cite{Schmudgen91})
\label{schmudgen_Positivstellensatz}
Consider a polynomial $p(\xv)$ and compact semialgebraic set $\mathcal{K} = \{\xv \in \mathbb{R}^n : g(\xv)_i \geq 0 \;\; i = 1 \ldots m \}$ defined by polynomials $(g(\xv)_i)_{i = 1 \ldots m}$. If $p(\xv) > 0 \; \forall \xv \in \mathcal{K}$, then:
\begin{align*}
    p(\xv) = s_0(\xv) + \sum_{i \in [m]} s_i(\xv)g_i(\xv) + \sum_{i_1,i_2 \in [m]^2} s_{i_1 i_2}(\xv)g_{i_1}(\xv)g_{i_2}(\xv) + \ldots \\ \sum_{i_1, \ldots i_m \in [m]^m} s_{i_1 \ldots i_m}(\xv)g_{i_1}(\xv)g_{i_2}(\xv) \ldots g_{i_m}(\xv),
\end{align*}
\end{theorem}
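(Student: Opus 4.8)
The plan is to prove this by duality, reducing it to the (full) moment problem on the compact set $\mathcal{K}$ --- essentially Schm\"udgen's original route. Write $T$ for the preordering generated by $g_1,\dots,g_m$, i.e. the set of all finite sums $\sum_{e\subseteq[m]}\sigma_e\prod_{i\in e}g_i$ with each $\sigma_e$ a sum of squares (for $e=\emptyset$ this contributes the term $s_0$); the theorem then asserts exactly that $p\in T$ whenever $p>0$ on $\mathcal{K}$. I would first turn this into a statement about linear functionals: since $T$ is a convex cone containing $1$, if $p\notin T$ then Hahn--Banach yields a linear functional $L:\mathbb{R}[\xv]\to\mathbb{R}$ with $L(q)\ge 0$ for all $q\in T$, $L(1)=1$, and $L(p)\le 0$. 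So it suffices to show that every such $T$-positive $L$ satisfies $L(p)>0$. One mild subtlety is that $T$ need not be closed, so the separation argument a priori only excludes $p$ from $\overline T$; the strict positivity of $p$ on $\mathcal{K}$, together with the Archimedean property below, is precisely what upgrades this to genuine membership $p\in T$, and I would follow the Jacobi--Prestel packaging of that last step.

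The heart of the argument is to represent any $T$-positive $L$ as integration against a probability measure supported in $\mathcal{K}$. The key structural input is that $T$ is \emph{Archimedean} because $\mathcal{K}$ is compact: there is an integer $N$ with $N-\sum_i x_i^2\in T$. I would establish this as a preliminary lemma; it does not follow formally from the definitions and is the first real obstacle --- the standard derivation applies Stengle's abstract (denominator-form) Positivstellensatz to the polynomial $N-\sum_i x_i^2$, which is strictly positive on $\mathcal{K}$ for $N$ large, and then clears denominators by an elementary manipulation, and I would cite Marshall or Prestel--Delzell for the details. Granting Archimedeanity, the GNS construction attached to $L$ (using $L(q^2)\ge 0$) produces a Hilbert space on which multiplication by each $x_i$ is a symmetric operator $X_i$; the relation $N-\sum x_i^2\in T$ gives $\sum X_i^2\le NI$, hence each $X_i$ is bounded, and since the $X_i$ commute the joint spectral theorem yields $L(q)=\int q\,d\mu$ for a probability measure $\mu$ on $\mathbb{R}^n$ with compact support. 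Finally, for each $i$ and every sum of squares $\sigma$ one has $0\le L(\sigma g_i)=\int\sigma g_i\,d\mu$; approximating nonnegative continuous functions by polynomial squares on the compact support of $\mu$ forces $g_i\ge 0$ $\mu$-almost everywhere, and since this holds for all $i$, $\operatorname{supp}\mu\subseteq\mathcal{K}$.

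With the representation in hand the conclusion is immediate: $L(p)=\int_{\mathcal{K}}p\,d\mu\ge\bigl(\inf_{\mathcal{K}}p\bigr)\,\mu(\mathcal{K})>0$, because $\mathcal{K}$ is compact and $p$ is continuous and strictly positive on it while $\mu$ is a probability measure. Combined with the separation step this shows $p\in\overline T$, and the Archimedean-plus-strict-positivity upgrade then places $p$ in $T$ itself, which is exactly the claimed identity (with the $s_{\bullet}$'s sums of squares). An alternative I would keep in reserve, and which matches the flavour of the present paper, is Schweighofer's purely algebraic derivation of Schm\"udgen's theorem from P\'olya's theorem (stated above) together with Stengle's Positivstellensatz: after an affine change of coordinates placing $\mathcal{K}$ inside an open box and clearing a denominator via Stengle, one applies P\'olya to a suitable homogenization to manufacture the positive-coefficient data that reassembles into the preordering representation. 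Either way the main obstacles are the same two ingredients --- deducing the Archimedean property from compactness, and controlling the denominator that Stengle's theorem introduces --- so I would develop those two lemmas first and treat the remainder as bookkeeping.
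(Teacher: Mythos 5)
The paper does not prove this statement: Schm\"udgen's theorem is quoted verbatim as classical background with a citation to \cite{Schmudgen91}, and it is not even used in the main argument (the reduction in Section 5 goes through Dickinson--Povh's Positivstellensatz instead). So there is no in-paper proof to compare against; what follows is an assessment of your sketch on its own terms.

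Your outline is the standard and correct route --- W\"ormann's trick (Stengle plus denominator-clearing) to get the Archimedean element $N-\sum_i x_i^2\in T$, then GNS plus the joint spectral theorem to solve the $\mathcal{K}$-moment problem for $T$-positive functionals, then separation. Two remarks. First, the one place where your logic as written is slightly backwards is the separation step: for a non-closed cone $T$ in $\mathbb{R}[\xv]$, plain Hahn--Banach does not separate $p\notin T$ from $T$, and there is no ``upgrade from $\overline{T}$ to $T$ afterwards.'' The correct order is to invoke the Archimedean property \emph{first}: it makes $1$ an order unit, hence an algebraic interior point of $T$, so Eidelheit/M.~Riesz separation applies directly and yields a state $L$ with $L(1)=1$, $L\ge 0$ on $T$, $L(p)\le 0$; the measure representation then gives the contradiction. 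This is exactly what the Jacobi--Prestel packaging you cite does, so the gap is one of exposition rather than substance, but as phrased the first paragraph would not survive refereeing. Second, be aware that the two ingredients you defer (Stengle's Positivstellensatz and the denominator-clearing induction in W\"ormann's lemma) carry essentially all of the depth of the theorem; a self-contained proof would need to develop them, whereas citing them is appropriate here since the paper itself only cites the result. Finally, note that the statement as printed in the paper is itself imprecise --- the sums should run over subsets $e\subseteq[m]$ (products $\prod_{i\in e}g_i$) rather than over tuples in $[m]^k$, and the qualifier that the $s_\bullet$ are SOS has drifted outside the theorem environment --- so your restatement via the preordering $T$ is actually the cleaner formulation.
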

where $s(\xv)_0, s(\xv)_i, s(\xv)_{i_1i_2}, \ldots s(\xv)_{i_1 \ldots i_m}$ are SOS polynomials. The above is essentially a search problem over SOS polynomials, and may again be converted to a SDP by representing SOS polynomials by semidefinite matrices. The problem may also be relaxed by restricting the number of compositions of $(g_i(\xv))_{i = 1 \ldots m}$ (i.e. truncating later summation terms on the RHS). The relaxation yields a converging hierarchy for the problem of verifying polynomial positivity over a semialgebraic set.  \\\\
Much more recently, Dickinson et al. developed a new Positivstellensatz for polynomial positive over a semialgebraic set, which is closely related to Polya's Positivstellensatz. 
\begin{theorem}(Dickson's Positivstellensatz \cite{DickinsonPovh14})
\label{dickson_positivstellensatz}
Consider some homogeneous polynomials $f_0,.....f_m \in \mathbb{R}[\xv]$. If $f_0(\xv) > 0 \; \forall \xv \in \mathbb{R}_+^n \cap \bigcap_{i=1}^m f^{-1}_i(\mathbb{R}_+) \backslash \{ \mathbf{0} \}$, then there exist some $p \in \mathbb{Z}_+$ and some homogeneous polynomials with nonnegative coefficients $g_0,.....g_m \in \mathbb{R}[\xv]$ such that $(\xv^\top \mathbf{1})^p f_0(\xv) = g_0(\xv) + \sum_{i=1}^m f_i(\xv)g_i (\xv)$. 
\end{theorem}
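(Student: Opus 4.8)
The plan is to prove \cref{dickson_positivstellensatz} by induction on the number $m$ of constraints, reducing at each step to the unconstrained case, which is the standard ``orthant'' form of Polya's theorem (equivalent to the version recalled above by the substitution $x_i\mapsto x_i^2$): a homogeneous polynomial positive on $\mathbb{R}^n_+\setminus\{\mathbf{0}\}$ acquires only nonnegative coefficients after multiplication by a high enough power of $\xv^\top\mathbf{1}$. I would work on the simplex $\Delta=\{\xv\in\mathbb{R}^n_+:\xv^\top\mathbf{1}=1\}$: by homogeneity of the $f_i$, the hypothesis is equivalent to $f_0>0$ on the compact set $K=\{\xv\in\Delta:f_i(\xv)\ge 0,\ i=1,\dots,m\}$, and since $\xv^\top\mathbf{1}\equiv 1$ on $\Delta$, any candidate identity may be multiplied by powers of $\xv^\top\mathbf{1}$ for free (adjusting $p$ and the degrees of the $g_i$ afterwards). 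The base case $m=0$ is Polya's theorem directly, since $\mathbb{R}^n_+\setminus\{\mathbf{0}\}$ is nonempty.

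For the inductive step, assume the theorem for $m-1$ constraints. Writing $T=\{\xv\in\mathbb{R}^n_+\setminus\{\mathbf{0}\}:f_i(\xv)\ge 0,\ i<m\}$ and $S=T\cap\Delta$, the goal is to produce a homogeneous $g_m$ with nonnegative coefficients (and a degree-balancing exponent $d$) such that $\tilde f_0:=(\xv^\top\mathbf{1})^d f_0-f_m g_m$ is homogeneous and positive on $T$; applying the induction hypothesis to $\tilde f_0$ with constraints $f_1,\dots,f_{m-1}$ gives $(\xv^\top\mathbf{1})^q\tilde f_0=h_0+\sum_{i<m}f_i h_i$ with nonnegative-coefficient $h_i$, and substituting and rearranging yields the claimed identity with $p=q+d$, $g_0=h_0$, $g_i=h_i$ ($i<m$), and $g_m$ scaled by $(\xv^\top\mathbf{1})^q$. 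If $S=\emptyset$ the hypothesis is vacuous, so I take $g_m=0$ and apply the induction hypothesis to $f_0$ itself; if $S\ne\emptyset$ but $K=S\cap\{f_m\ge 0\}=\emptyset$, then $f_m\le-\eta<0$ on the compact set $S$ and a large enough constant $g_m$ works. The real case is $K\ne\emptyset$: then $\delta:=\min_K f_0>0$, $M:=-\min_S f_0<\infty$, the compact sets $K$ and $Q:=\{\xv\in S:f_0(\xv)\le\delta/2\}$ are disjoint (any $\xv\in S$ with $f_m(\xv)\ge 0$ lies in $K$, where $f_0\ge\delta$), and $f_m\le-\eta$ on $Q$ for some $\eta>0$. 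I would pick $g_m$ that is small on $K$ and large on $Q$ by taking a continuous $\phi:\Delta\to[0,\infty)$ vanishing on $K$ and equal to $(M+1)/\eta$ on $Q$, then setting $g_m:=B_d\phi$ where $B_d\phi(\xv)=\sum_{|\alpha|=d}\phi(\alpha/d)\binom{d}{\alpha}\xv^\alpha$ is the degree-$d$ Bernstein polynomial on $\Delta$ (homogeneous, nonnegative coefficients, uniformly close to $\phi$ for $d$ large). A short sign check on $S$ then gives $\tilde f_0>0$: on $K$, $0\le f_m g_m\le\delta/2$ so $\tilde f_0\ge\delta/2$; where $f_m\le 0$ we have $-f_m g_m\ge 0$, hence $\tilde f_0\ge f_0>\delta/2$ off $Q$, while on $Q$, $-f_m g_m\ge\eta g_m>M\ge-f_0$ forces $\tilde f_0>0$.

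The step I expect to be the main obstacle is this construction of $g_m$: one nonnegative-coefficient polynomial must simultaneously absorb the (possibly large) negativity of $f_0$ on all of $\{f_m<0\}$ yet stay small on $K$, and the sharp point is that $f_m$ vanishes on $\partial K$, so no constant multiplier works --- the argument must use that $f_0$ is bounded below by a positive constant exactly where $f_m$ is small (near $K$), which is what pins the ``bad'' set $Q$ inside $\{f_m\le-\eta\}$. The enabling facts are standard and I would just cite or verify them: nonnegative continuous functions on the simplex are uniform limits of homogeneous polynomials with nonnegative coefficients (Bernstein operators), and products and powers of $\xv^\top\mathbf{1}$ preserve the nonnegative-coefficient property. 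The remainder --- degree bookkeeping through the induction, the two easy sub-cases, and unwinding the simplex normalization back to the homogeneous statement --- is routine.
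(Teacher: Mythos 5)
The paper does not prove this statement: it imports it verbatim from \cite{DickinsonPovh14} and explicitly refers the reader to that article, the only related in-paper argument being the appendix proof of \cref{thm:infinite_to_finite}, which is a different claim (passing from infinitely many to finitely many constraints). So your proposal has to be judged on its own merits, and it checks out; moreover, its skeleton --- induction on the number of constraints with Polya's simplex theorem as the base case --- is the same strategy as the original Dickinson--Povh argument, with the Bernstein operator giving a clean realization of their key multiplier lemma. The central construction is sound: with $S=T\cap\Delta$, $K=S\cap\{f_m\ge 0\}$, $\delta=\min_K f_0>0$ and $Q=\{\xv\in S: f_0(\xv)\le\delta/2\}$, the sets $K$ and $Q$ are disjoint compacta, $f_m\le-\eta<0$ on $Q$ by compactness, and the Bernstein approximant of a Urysohn function vanishing on $K$ and equal to $(M+1)/\eta$ on $Q$ is homogeneous with nonnegative coefficients; the three-region sign check (on $K$, on $S\setminus(K\cup Q)$, on $Q$) closes once the uniform approximation error is simultaneously small against $\delta/2$ relative to $\max_K f_m$ and against $1/(2\eta)$, and homogeneity lifts positivity from $S$ back to the cone $T$. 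Your degenerate branches also hold together: since $T_0=\mathbb{R}^n_+\setminus\{\mathbf{0}\}$ meets $\Delta$, the nested chain of feasible sets first becomes empty (if ever) at a level where the previous one is nonempty, which is exactly your ``$S\ne\emptyset$, $K=\emptyset$, constant multiplier'' case, so the vacuous-hypothesis recursion bottoms out correctly. The only items a full write-up should make explicit are the degree bookkeeping --- the Bernstein degree determines $\deg g_m$, so $d=\deg g_m+\deg f_m-\deg f_0$ must be checked nonnegative (true for large enough Bernstein degree), and multiplying the final identity by further powers of $\xv^\top\mathbf{1}$ preserves nonnegative coefficients, so residual degree mismatches among the $g_i$ are harmless.
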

Readers are encouraged to visit the original article for full exposition of the result \cite{DickinsonPovh14}. The significance of the above result in contrast to Schmudgen's is that by a multiplication of an additional function, the polynomial in question may be represented by a single sum of products of two polynomials, one defining the constrained set and another one with positive coefficients, without increasing the number polynomials to be composed in the products. Homogeneity is assumed in both the polynomial in question and the ones defining the constrained set. We will crucially rely on the above result to prove the main result of this article. 


\section{Main Result}
In this section we describe the main result of the article. 
\begin{theorem}
\label{thm:main_result}
Let $\cv \in \mathbb{R}^{\ell}$ and $\Av = [\Av_1 \ldots \Av_\ell]^\top \subset \mathbb{Q}^{\ell \times n}$. Consider the signomial $\text{Sig}(\cv, \Av)$ and a compact convex set $\mathcal{X} \subset \mathbb{R}^n $. If $\text{Sig}(\cv, \Av)(\xv) > 0 \; \forall \xv \in \mathcal{X}$, then there exists some $p \in \mathbb{Z}_+$ such that $(\exp(\Av \xv)^\top \mathbf{1})^p \text{Sig}(\cv, \Av)$ is a conditional SAGE signomial in $\text{SAGE}(E_{p+1}(\Av), \mathcal{X})$. 
\end{theorem}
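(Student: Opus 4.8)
The plan is to reduce the statement to Dickinson's Positivstellensatz (\Cref{dickson_positivstellensatz}) by moving to the polynomial ring in the $\ell$ new variables $z_j:=\exp(\Av_j\xv)$. Using \emph{these} functions as variables (rather than $\exp(x_1),\dots,\exp(x_n)$ together with a fresh homogenising variable) is the crucial choice: the signomial becomes the \emph{linear}, hence homogeneous, form $\sum_j c_j z_j$, and Dickinson's multiplier $(\mathbf 1^\top\mathbf z)^p=(z_1+\dots+z_\ell)^p$ pulls back under $z_j=\exp(\Av_j\xv)$ to exactly $(\exp(\Av\xv)^\top\mathbf 1)^p$. Two preliminary reductions: rescaling $\xv$ by the common denominator of the entries of $\Av$ lets us assume $\Av\in\mathbb Z^{\ell\times n}$ (this rescales $\mathcal X$ and $E_{p+1}(\Av)$ by the same factor); and an induction on $n$ reduces to the case in which the \emph{affine} hull of $\{\Av_1,\dots,\Av_\ell\}$ is all of $\mathbb R^n$ --- if not, either $\mathrm{span}\{\Av_j\}\neq\mathbb R^n$ (so $\text{Sig}(\cv,\Av)$ factors through the orthogonal projection onto that span) or there is $\mathbf e\neq\mathbf 0$ with $\mathbf e^\top\Av_j$ independent of $j$, so $\text{Sig}(\cv,\Av)(\xv+t\mathbf e)=e^{ct}\text{Sig}(\cv,\Av)(\xv)$ and, after multiplying through by one exponential, the problem is equivalent to one in $n-1$ variables with full affine hull; in either case a lower-dimensional certificate lifts back by multiplication with a single exponential, which preserves conditional SAGE membership. (Across the induction one carries the mild extra property that every exponent used is a nonnegative integer combination $\sum_j m_j\Av_j$ with $\sum_j m_j=p+1$; this is what keeps all exponents inside $E_{p+1}(\Av)$.)

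Since $\text{Sig}(\cv,\Av)$ is continuous and $\mathcal X$ compact, $\text{Sig}(\cv,\Av)>\varepsilon>0$ on some open $U\supseteq\mathcal X$. Because the affine hull of $\{\Av_j\}$ is full, the integer lattice $\Lambda_0=\{\sum_j w_j\Av_j:\ w_j\in\mathbb Z,\ \sum_j w_j=0\}$ has full rank, hence its set of directions is dense in the unit sphere; a standard polytopal outer approximation then yields a \emph{bounded} polytope $P=\{\xv:\vv_k^\top\xv\le d_k,\ k\in[K]\}$ with $\mathcal X\subseteq P\subseteq U$, each normal $\vv_k=\sum_j w_j^{(k)}\Av_j\in\Lambda_0$ (so $\sum_j w_j^{(k)}=0$) and $d_k=\sigma_\mathcal{X}(\vv_k)$. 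As $P\supseteq\mathcal X$ gives $\text{SAGE}(E_{p+1}(\Av),P)\subseteq\text{SAGE}(E_{p+1}(\Av),\mathcal X)$, it suffices to certify $(\exp(\Av\xv)^\top\mathbf 1)^p\,\text{Sig}(\cv,\Av)$ over $P$. Put $f_0(\mathbf z)=\sum_j c_j z_j$. The cone $\{t\exp(\Av\xv):t>0,\ \xv\in\mathbb R^n\}$ is cut out of the open orthant by finitely many homogeneous binomials $B_r(\mathbf z)=\mathbf z^{u_r^+}-\mathbf z^{u_r^-}$, where $u_1,\dots,u_R$ generate $\ker(\Av^\top)\cap\mathbf 1^\perp\cap\mathbb Z^\ell$ (homogeneous because $u_r\perp\mathbf 1$). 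Each facet of $P$ becomes, after clearing its unique negative exponent, $\hat F_k(\mathbf z)=e^{d_k}\prod_{w_i^{(k)}<0}z_i^{-w_i^{(k)}}-\prod_{w_i^{(k)}>0}z_i^{w_i^{(k)}}\ge 0$, which has a single negative term and is homogeneous precisely because $\sum_i w_i^{(k)}=0$. Finally adjoin, for $j\in[\ell]$, the homogeneous single-negative-term inequalities $g_j(\mathbf z)=\prod_i z_i-\delta_j z_j^{\ell}\ge 0$ with $\delta_j:=\inf_{\xv\in P}\exp\!\big((\textstyle\sum_i\Av_i-\ell\Av_j)\xv\big)>0$.

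One then checks Dickinson's hypothesis for $f_0$ and the homogeneous data $\hat F_k,\ \pm B_r,\ g_j$: the set $\mathbb R_+^{\ell}\cap\bigcap_k\{\hat F_k\ge0\}\cap\bigcap_r\{B_r=0\}\cap\bigcap_j\{g_j\ge0\}\setminus\{\mathbf 0\}$ contains no boundary point of $\mathbb R_+^{\ell}$ (if $z_i=0$ and $z_j>0$ then $g_j(\mathbf z)<0$), so it equals $\{e^s\exp(\Av\xv):s\in\mathbb R,\ \xv\in P\}$, and there $f_0=e^s\,\text{Sig}(\cv,\Av)(\xv)>0$. Hence \Cref{dickson_positivstellensatz} gives $p\in\mathbb Z_+$, polynomials $G_0,G_k,\Gamma_j$ with nonnegative coefficients, and $H_r$ (combining the multipliers of $B_r\ge0$ and $-B_r\ge0$), all homogeneous and making every term below of degree $p+1$, with
\[
(z_1+\dots+z_\ell)^p f_0(\mathbf z)=G_0(\mathbf z)+\sum_k \hat F_k(\mathbf z)G_k(\mathbf z)+\sum_r B_r(\mathbf z)H_r(\mathbf z)+\sum_j g_j(\mathbf z)\Gamma_j(\mathbf z).
\]
Substituting $z_j=\exp(\Av_j\xv)$: the left side is $(\exp(\Av\xv)^\top\mathbf 1)^p\,\text{Sig}(\cv,\Av)$; each $B_r$-term vanishes identically (the relations hold on the image); and since every $\mathbf z$-monomial occurring has total degree $p+1$, every resulting signomial exponent is $\sum_j m_j\Av_j$ with $m_j\in\mathbb Z$, $m_j\ge 0$, $\sum_j m_j=p+1$, so it is a row of $E_{p+1}(\Av)$. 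The image of $G_0$ is a signomial with nonnegative coefficients, hence conditional SAGE; the image of $\hat F_k$ is a single-negative-term signomial that is $\ge0$ on $P$ and the image of $G_k$ has nonnegative coefficients, so distributing the latter over the former exhibits the image of $\hat F_k G_k$ as a finite sum of members of $\text{AGE}(E_{p+1}(\Av),P,\cdot)$; similarly for $g_j\Gamma_j$. A sum of conditional AGE signomials is conditional SAGE, so $(\exp(\Av\xv)^\top\mathbf 1)^p\,\text{Sig}(\cv,\Av)\in\text{SAGE}(E_{p+1}(\Av),P)\subseteq\text{SAGE}(E_{p+1}(\Av),\mathcal X)$, as required.

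The step I expect to be the main obstacle is making the reduction to \Cref{dickson_positivstellensatz} \emph{legitimate}: Dickinson requires the data to be homogeneous and $f_0$ to be positive on the \emph{closed}-orthant cone rather than merely on its interior. Homogeneity is exactly what forces the reduction to full affine hull and the choice of polytope normals inside $\mathbf 1^\perp$; and the closed-orthant requirement is exactly what necessitates the auxiliary inequalities $g_j$ (together with the compactness of $P$, which is what makes $\delta_j>0$) in order to remove the spurious boundary strata that the toric cone $\{t\exp(\Av\xv)\}$ acquires in its closure. A secondary but pervasive point is verifying that every exponent generated by Dickinson's decomposition stays inside $E_{p+1}(\Av)$; this is handled once one observes that the whole decomposition can be taken homogeneous of degree $p+1$ in the variables $z_j$.
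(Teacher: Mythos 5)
Your proposal is correct in its overall architecture and reaches the result by a genuinely different route from the paper, even though both arguments pivot on the same two ideas: the substitution $z_j=\exp(\Av_j\xv)$ turning the signomial into a linear form, and \Cref{dickson_positivstellensatz} applied to constraint polynomials that are differences of two monomials (so that distributing the nonnegative-coefficient multipliers over them yields conditional AGE pieces). The differences are substantive. First, the paper writes $\mathcal{X}$ as an intersection of \emph{infinitely many} rational halfspaces and must then prove a separate finiteness lemma (\Cref{thm:infinite_to_finite}, via a compactness argument on the sphere) before Dickinson's theorem applies; you instead exploit that strict positivity on the compact $\mathcal{X}$ persists on an open neighborhood and outer-approximate by a \emph{finite} polytope $P$ with $\mathcal{X}\subseteq P\subseteq U$, using $\text{SAGE}(E_{p+1}(\Av),P)\subseteq\text{SAGE}(E_{p+1}(\Av),\mathcal{X})$; this eliminates the finiteness lemma entirely. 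Second, the paper achieves homogeneity by inserting a zero exponent row (so $y_{n+1}=1$ serves as a homogenizing variable) and multiplying monomials by powers of $y_{n+1}$ --- a step that, taken literally, alters $\Av$, $E_{p+1}(\Av)$ and the modulator $\exp(\Av\xv)^\top\mathbf{1}$; you obtain homogeneity intrinsically by restricting the polytope normals to the lattice $\Lambda_0\subseteq\mathbf{1}^\perp$ of affine integer combinations, at the price of the reductions to integer exponents and full affine hull. Third, both proofs must excise the spurious boundary strata acquired in the closed orthant: the paper uses box constraints $l_i\le y_i\le u_i$ from compactness, you use the single-negative-term inequalities $\prod_iz_i\ge\delta_jz_j^\ell$; these are interchangeable. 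The places where your sketch would need fleshing out are exactly the ones you flag plus two more: the polytopal approximation lemma (density of $\Lambda_0$-directions plus a covering argument on $\bar B\setminus U$ for a large ball $B$), the justification that the Dickinson decomposition may be taken homogeneous of degree $p+1$ termwise (collect by degree and discard the parts that cancel --- the paper makes the same move), and the bookkeeping in the induction step showing that $E_{p+1}(\Av-\mathbf{1}\Av_1)+(p+1)\Av_1=E_{p+1}(\Av)$ so the certificate lifts with the correct exponent set. None of these is a gap in the sense of a missing idea; they are routine once stated.
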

The theorem may be thought of as a Positivstellensatz for signomials over an arbitrary compact convex set. The rationality of the exponent vectors is required. \\\\
The theorem shows that given a fixed set of exponents and a compact convex set, conditional SAGE cone becomes exactly the set of coefficients for which the signomial defined by the exponents is positive over the set. That is:
\begin{align*}
   \exists p \in \mathbb{Z}_+ \mbox{ such that } C^{(p)}_{\text{SAGE}}(\Av, \mathcal{X}) = C_{NNS}(\Av, \mathcal{X}).
\end{align*}
It follows that the conditional SAGE relaxation attains optimal value at some finite level in the hierarchy. That is:
\begin{align*}
   \exists p \in \mathbb{Z}_+ \mbox{ such that } f^{(p)}_{\text{SAGE}} = \inf_{\xv \in \mathcal{X}} \text{Sig}(\cv, \Av)(\xv).
\end{align*}

\section{Proof of Main Result}
\label{sec:proof}
The proof is structured as follows. We first note that any compact convex set $\mathcal{X}$ may be expressed as an intersection of a set of (possibly infinite) rational halfspaces $H_{\mathcal{X}}$. We then apply change of variable $\yv = \exp(\Av \xv)$ and show that positivity of signomial $\text{Sig}(\cv, \Av)$ over intersection of the set of halfspaces $H_{\mathcal{X}}$ implies the positivity of a corresponding polynomial $p(\yv)$ over a set $\mathcal{T}^{(\Av, \mathcal{X})}_{++}$, the intersection of the positive orthant and a set defined by (possibly infinite) polynomial inequalities. We make modifications to the aforementioned set to show that the positivity of $p(\yv)$ over such set implies its positivity over $\mathcal{T}_+^{(\Av, \mathcal{X})*}$, the intersection of the nonnegative orthant and a homogeneous semialgebraic set (defined by finite homogeneous polynomials). We then appeal to Dickson's Positivstellensatz to claim that $(\yv^\top \mathbf{1})^p p(\yv)$ for some $p \in \mathbb{Z}_+$ can be expressed as a composition of polynomials with only positive coefficients and the homogeneous polynomials defining $\mathcal{T}_+^{(\Av, \mathcal{X})*}$. Undoing the variable change, we show through the composition that the original signomial after multiplication of an extra function, $(\exp(\Av \xv)^\top \mathbf{1})^p \text{Sig}(\cv, \Av)$, is a conditional SAGE signomial. \\\\
Without loss of generality, we may make the following assumptions on the exponent vectors $\Av \in \mathbb{Q}^{\ell \times n}$ defining $\text{Sig}(\cv, \Av)$: \\
\begin{enumerate}[label=(\alph*)]
    \item the first n rows $(\Av_j)_{j=1...n}$ are linearly independent.
    \item $\Av_{n+1} = \mathbf{0}$. \\
\end{enumerate} 
The assumptions are in fact not restrictive. To satisfy the first condition, we may select a set of linearly independent exponents as the first $n$. The proof is easily generalized to the case where the span of the exponent vectors has dimension less than $n$. The second condition is not restrictive either, since we may insert a zero vector into the set of exponents. However, it is a variable used for construction of certain sets in the proof. \\\\
The proof at heart is a reduction to Dickson's Positivstellensatz. The proof is divided into sections explaining each step in the reduction.

\subsection{Representation of Compact Convex Set as Rational Halfspaces}
In this subsection we discuss the following result which finds connection between a compact convex set and rational halfspaces. It has been shown by Silva et al. and we will paraphrase it below. 
\begin{theorem}(\cite{Silva20})
\label{thm:convex_set_rational}
    A rational halfspace $h \subset \mathbb{R}^n$ is a set $\{\xv \in \mathbb{R}^n : \wv^\top \xv \leq \bv \}$ where $\wv \in \mathbb{Q}^n, \bv \in \mathbb{Q}$. Given a compact convex set $\mathcal{X}$, there exists a set of rational halfspaces $H$ such that $\mathcal{X} = \cap_{h \in H} h$.
\end{theorem}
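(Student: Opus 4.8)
The plan is to reduce \Cref{thm:convex_set_rational} to two standard facts of convex analysis together with a rounding step that is where compactness does the real work.

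\textbf{Step 1 (reduction to a separation statement).} First I would invoke the classical fact that every closed convex set equals the intersection of all closed halfspaces containing it --- an immediate consequence of the separating hyperplane theorem. It therefore suffices to prove: for every $\yv \notin \mathcal{X}$ there is a \emph{rational} halfspace $h$ with $\mathcal{X} \subseteq h$ and $\yv \notin h$. Granting this, one takes $H$ to be the family of all rational halfspaces containing $\mathcal{X}$; then $\mathcal{X} \subseteq \bigcap_{h\in H} h$ is immediate, while any $\yv \notin \mathcal{X}$ fails to lie in its witnessing $h \in H$, giving $\bigcap_{h\in H} h \subseteq \mathcal{X}$. (The degenerate case $\mathcal{X} = \emptyset$ is handled directly, e.g.\ $\emptyset = \{\xv : x_1 \le 0\} \cap \{\xv : -x_1 \le -1\}$; henceforth assume $\mathcal{X} \neq \emptyset$.)

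\textbf{Step 2 (strong separation).} Fix $\yv \notin \mathcal{X}$. Since $\mathcal{X}$ is compact --- in particular closed --- and $\{\yv\}$ is compact, they can be strongly separated (see, e.g., \cite{Rockafellar70}): there are a unit vector $\wv \in \mathbb{R}^n$, a scalar $b \in \mathbb{R}$, and a gap $\varepsilon > 0$ with $\wv^\top \xv \le b$ for all $\xv \in \mathcal{X}$ and $\wv^\top \yv \ge b + \varepsilon$. Let $R := \sup_{\xv \in \mathcal{X}} \|\xv\| < \infty$, which exists by boundedness of $\mathcal{X}$.

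\textbf{Step 3 (rationalization).} The only step requiring care is perturbing $(\wv, b)$ to a rational pair $(\wv', b')$ without breaking either inequality. Using density of $\mathbb{Q}^n$ in $\mathbb{R}^n$, pick a rational $\wv'$ with $\|\wv' - \wv\| < \varepsilon / (R + \|\yv\|)$; then the open interval $(\,b + \|\wv' - \wv\|\,R,\ b + \varepsilon - \|\wv' - \wv\|\,\|\yv\|\,)$ is nonempty, and by density of $\mathbb{Q}$ one picks a rational $b'$ inside it. Cauchy--Schwarz then gives $\wv'^\top \xv \le b + \|\wv' - \wv\|\,R < b'$ for all $\xv \in \mathcal{X}$, so $\mathcal{X} \subseteq h := \{\xv : \wv'^\top \xv \le b'\}$, and $\wv'^\top \yv \ge b + \varepsilon - \|\wv' - \wv\|\,\|\yv\| > b'$, so $\yv \notin h$. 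This $h$ is the rational halfspace demanded by Step 1, completing the argument.

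I expect Step 3 to be the conceptual obstacle, even though it is computationally light. The naive idea of simply rationalizing the coefficients of a supporting hyperplane at a boundary point of $\mathcal{X}$ fails, since such a hyperplane touches $\mathcal{X}$ with no slack and an arbitrarily small tilt of its normal can throw part of $\mathcal{X}$ to the wrong side. Compactness is exactly what repairs this: boundedness supplies the uniform radius $R$ controlling the perturbation error over all of $\mathcal{X}$ at once, and closedness supplies the strictly positive gap $\varepsilon$ between $\yv$ and $\mathcal{X}$; together they produce a nonempty interval from which a rational offset can be drawn.
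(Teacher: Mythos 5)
Your proof is correct, and it shares the paper's first move --- reducing the claim to showing that every $\yv \notin \mathcal{X}$ is strictly separated from $\mathcal{X}$ by a halfspace with rational normal and rational offset --- but it establishes that separation step by a genuinely different mechanism. The paper argues by contrapositive: it defines $\xi$ as the intersection of all halfspaces $\{\xv : \wv^\top \xv \le \sigma_\mathcal{X}(\wv)\}$ with $\wv \in \mathbb{Q}^n$, and shows $\xi \subseteq \mathcal{X}$ by taking a sequence of rational normals converging to an arbitrary real normal, invoking continuity of the support function (finite everywhere because $\mathcal{X}$ is compact) to pass to the limit, and then citing Rockafellar's theorem that a closed convex set is the intersection of all its supporting halfspaces. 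You instead argue directly and quantitatively: strong separation supplies a gap $\varepsilon$, boundedness supplies the radius $R$, and a Cauchy--Schwarz estimate shows that any rational normal within $\varepsilon/(R+\|\yv\|)$ of the true one still separates, after which a rational offset is drawn from the resulting nonempty interval. Your version is more self-contained (no appeal to continuity of support functions or to the supporting-halfspace representation theorem) and makes explicit where closedness and boundedness each enter; the paper's version is shorter given the cited machinery. Your estimates check out: the interval $\bigl(b + \|\wv'-\wv\|R,\; b+\varepsilon-\|\wv'-\wv\|\,\|\yv\|\bigr)$ is nonempty precisely under your perturbation bound, and the degenerate case $R+\|\yv\|=0$ cannot occur for nonempty $\mathcal{X}$ since it would force $\yv=\mathbf{0}\in\mathcal{X}$. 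You also handle $\mathcal{X}=\emptyset$, which the paper leaves implicit.
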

\begin{proof}
The proof is nonconstructive. It suffices to show that for each $\yv \in \mathbb{R}^n \backslash \mathcal{X}$, there exists $\wv \in \mathbb{Q}^n, b \in \mathbb{Q}$ such that $\wv^\top \xv \leq b < \wv^\top \yv \; \forall \xv \in \mathcal{X}$. Recall that $\mathbb{Q}$ is dense in $\mathbb{R}$. Thus it suffices to show that for each $\yv \in \mathbb{R}^n \backslash \mathcal{X}$, there exists $\wv \in \mathbb{Q}^n$ such that $\sigma_{\mathcal{X}}(\wv) < \wv^\top \yv$. Then by denseness, we may find $ b \in \mathbb{Q} \mbox{ s.t } \sigma_{\mathcal{X}}(\wv) < b < \wv^\top \yv$,  which implies the desired result. \\\\
Let $\xi$ be the set the intersection of all halfspaces defined by rational vector containing $\mathcal{X}$. $\xi = \{\xv: \wv^\top \xv \leq \sigma_{\mathcal{X}}(\wv), \; \forall \wv \in \mathbb{Q}^n \}$. Proof is complete if $\mathcal{X} = \xi$, since by definition of $\xi$, if $\yv \in \mathbb{R}^n \backslash \xi$, there exists some $\wv \in \mathbb{Q}^n$ such that $\sigma_{\mathcal{X}}(\wv) < \wv^\top \yv $. Indeed, $\mathcal{X} = \xi$, as shown below. \\\\
First, suppose $\bar{\xv} \in \mathcal{X}$. For any $\wv \in \mathbb{Q}^n$, $\wv^\top \bar{\xv} \leq \sigma_{\mathbb{X}}(\wv)$ by definition, and thus $\bar{\xv} \in \xi$. $\mathcal{X} \subseteq \xi$ by construction. Next, suppose $\bar{\xv} \in \xi$. Consider any $\bar{\wv} \in \mathbb{R}^n$. Let $(\wv_k)_{k \in \mathbb{N}} \subset \mathbb{Q}^n$ be a sequence of vectors converging to $\bar{\wv}$.  Since $\mathcal{X}$ is compact, support function $\sigma_{\mathcal{X}}(\cdot)$ is continuous. Image of convergent sequence under continuous map converges, thus  $(\sigma_{\mathcal{X}}(\wv_k))_{k \in \mathbb{N}}$ converges to $\sigma_{\mathcal{X}}(\bar{\wv})$. By assumption and definition of $\xi$, $\wv^\top_k \bar{\xv} \leq \sigma_{\mathcal{X}}(\wv_k)$ for all $k$. Taking the limit preserves inequality, so $\bar{\wv}^\top \bar{\xv} \leq \sigma_{\mathcal{X}}(\bar{\wv})$. This implies $\bar{\xv} \in \{\xv \in \mathbb{R}^n: \wv^\top \xv \leq \sigma_{\mathcal{X}}(\wv) \; \forall \wv \in \mathbb{R}^n \}$. By Theorem 13.1 of \cite{Rockafellar70}, a convex set is the intersection of all of its supporting hyperplanes, i.e. $\mathcal{X} = \{\xv \in \mathbb{R}^n: \wv^\top \xv \leq \sigma_{\mathcal{X}}(\wv) \; \forall \wv \in \mathbb{R}^n \}$ and thus $\bar{\xv} \in \mathcal{X}$. 
\end{proof} \newline
Without loss of generality, for the compact convex set $\mathcal{X}$, let $H_{\mathcal{X}}$ denote the set of rational halfspaces defining $\mathcal{X}$. We have shown:
\begin{align*}
    \mathcal{X} = \cap_{h \in H_{\mathcal{X}}} h.
\end{align*}

\subsection{Signomial to Polynomial}
\label{sec:variable_change}
We have now reduced the positivity of signomial $\text{Sig}(\cv, \Av)$ over the compact convex set $\mathcal{X}$ to its positivity over intersection of a set of rational halfspaces $H_\mathcal{X}$. In this section we apply the change of variable $\yv = \exp(\Av \xv) \in \mathbb{R}^\ell_{++}$. We obtain a polynomial after the change of variable $p(\yv) = \cv^\top \yv$ as result. Further, we are interested in the sufficient and necessary conditions on $\yv = \exp(\Av \xv) \in \mathbb{R}^\ell_{++}$ given $\xv \in \cap_{h \in H_{\mathcal{X}}} h$.

\subsubsection{Restriction of Exponentials As Polynomial Constraints}
When $\Av \in \mathbb{Q}^{\ell \times n}$ has rank less than $\ell$, its range does not cover $\mathbb{R}^\ell$, which adds restriction to $\yv = \exp(\Av \xv) \in \mathbb{R}^\ell_{++}$. Since first $n$ exponents $(\Av_j)_{j=1...n}$ are linearly independent, the rest of the vectors may be expressed as rational linear combinations of the first $n$ vectors, and thus they are constrained by the first $n$ vectors. For $\Av_j$ with $j \geq n+2$, $\Av_j = \sum_i^{n+1} w_i^{(j)} \av^{(i)}$ where $\wv^{(j)} \in \mathbb{Q}^n$ for all $j$. Then; 
\begin{align*}
    y_j &= \exp (\Av_j \xv) = \exp (\sum_i^{n+1} w^{(j)}_i \Av_i \xv) = \prod_i^{n+1} \exp (\Av_j\xv )^{w^{(j)}_i} = \prod_i^{n+1} y_i ^{w^{(j)}_i} = \prod_i^{n} y_i ^{w^{(j)}_i}.
\end{align*}
The last step is from the fact that $\Av_n = \mathbf{0}$. $w^{(j)}_i \geq 0 \;\; \forall i$ and since $w^{(j)}_i$'s are rationals, we may raise both sides by the common denominator to clear the fractions. For example, $y_j = y_1^{\frac{1}{2}} y_2^{\frac{1}{4}} \iff y_j^4 =  y_1^2 y_2 $. \\\\
We may apply such operation to $y_j$ for all $j \geq n+2$. The operation is only valid for $\yv$ in the positive orthant.
\begin{align*}
    y_j =  \exp (\Av_j \xv) = \prod_i^{n} y_i ^{w^{(j)}_i} \; \forall j=n+2...l 
    & \iff y_j^{\lambda^{(j)}_j} =  \prod_i^{n} y_i ^{\lambda^{(j)}_i} \; \forall j=n+2 \ldots l \\
    & \iff y_j^{\lambda^{(j)}_j} -  \prod_i^{n} y_i ^{\lambda^{(j)}_i} = 0 \; \forall j=n+2 \ldots l.
\end{align*}
Where $\lambdav^{(j)}$'s are obtained from the above procedure.
\subsubsection{Rational Halfspace Constraint to Polynomial Constraint}
To characterize the constraint $\xv \in \cap_{h \in H_{\mathcal{X}}}$ on $\yv = \exp(\Av \xv)$, we begin by considering a single rational halfspace constraint on $\xv \in \mathbb{R}^n$. Let $h = \{\xv: \wv^\top \xv \leq d \} \in H_{\mathcal{X}}$. $\wv \in \mathbb{Q}^n$ and $d \in \mathbb{Q}$. Recall that $\Av \in \mathbb{Q}^{\ell \times n}$. Given such constraint on $\xv$, what can we say about the exponential of rational linear map of $\xv$, $\yv = \exp(\Av \xv)$? \\\\
First consider the rational linear map $\Av \xv$ subject to a rational halfspace constraint on $\xv$: $\wv^\top \xv \leq d$. We may find a rational halfspace constraint on the linear map. By assumption we have that $\text{rank}(\Av) = n$ which implies columns of $\Av$ are linearly independent. Thus there exits a left-inverse $\Lv = (\Av^\top \Av)^{-1}\Av^\top \in \mathbb{Q}^{n \times \ell}$ such that $\Lv\Av \xv = \xv$. The rationality of $\Lv$ follows from the rationality of $\Av$ and the fact that the inverse operation preserves rationality. Letting $\bv = \wv^\top \Lv \in \mathbb{Q}^n$, we have that $\bv^\top (\Av \xv) \leq d$. \\\\
We then consider exponential of the linear map $\exp(\Av \xv)$ subject to a rational halfspace constraint on $\xv$. Beginning with $\yv = \exp(\Av \xv) \iff \log \yv = \Av \xv$, a series of algebraic operations follows below. In doing so, we assume $\yv \in \mathbb{R}^n_{++}$. 
\begin{align*}
    \bv(\Av \xv) \leq d
    & \iff \bv(\log \yv) \leq d \iff \sum_{i=1}^n b_i (\log y_i) \leq d \\
    & \iff  \sum_{i=1}^n  (\log y_i^{b_i}) \leq d  \iff  \log (\prod_{i=1}^n y_i^{b_i}) \leq d \\
    & \iff \prod_{i=1}^n y_i^{b_i} \leq \exp(d)
    \iff  \prod_{i \; : \; b_i > 0} y_i^{b_i} \leq \exp (d) \prod_{i \; : \; b_i < 0} y_i^{-b_i}.
\end{align*}
The last step moves terms with negative exponents by multiplication on both sides. For example; $y_1^{2}y_2^{-3} \leq 1 \iff y_1 \leq y_2^{3}$. Now, since $\bv \in \mathbb{Q}^n$ has rational entries, we may raise both sides by a common denominator to clear fraction. Let  $m$ be the common denominator of $(b_i)_{i = 1 \ldots n}$:
\begin{align*}
 \prod_{i \; : \; b_i > 0} y_i^{b_i} \leq \exp (d) \prod_{i \; : \; b_i < 0} y_i^{-b_i} \\
 \iff \prod_{i \; : \; b_i > 0} y_i^{m b_i} \leq  \exp (m d ) \prod_{i \; : \; b_i < 0} y_i^{-m b_i} \\
 \iff  \exp ( m d ) \prod_{i \; : \; b_i < 0} y_i^{-m b_i}  - \prod_{i \; : \; b_i > 0} y_i^{m b_i} \geq 0.
\end{align*}
Which are indeed polynomial inequalities. 
\subsubsection{Intersection of Rational Halfspaces as Intersection of Intersection of Polynomial Constraint} 
In the above, a single rational halfspace constraint has been shown equivalent to a polynomial constraint. To extend the above to the intersection of (possibly infinite) halfspaces, we simply take the intersection of the polynomial inequalities generated from them. 
For each $k$, Let $\gammav^{(k)} = {m^{(k)} \Bv_{k,:}} \in \mathbb{Z}^\ell$ and $c^{(k)} = \exp (m^{(k)} d) \in \mathbb{R}_{++}$. Let $K_{\mathcal{X}}$ be a set of (possibly infinite) indices corresponding to the rational halfspaces defining $\mathcal{X}$. We may write as below:
\begin{align*}
c^{(k)} \prod_{i \; : \; \gamma^{(k)}_i < 0} y_i^{-\gamma^{(k)}_i} - \prod_{i \; : \; \gamma^{(k)}_i > 0} y_i^{\gamma^{(k)}_i} \geq 0 \;\; \forall k \in K_{\mathcal{X}}.
\end{align*} 
Now, consider the following set that depends on $\Av \in \mathbb{Q}^{\ell \times n}$ and $\mathcal{X} \subset \mathbb{R}^n$. Recall the assumption $\Av_{n+1} = \mathbf{0}$, which implies $y_{n+1} = \exp(\Av_{n+1} \xv) = 1$. 
\begin{align*}
\mathcal{T}_{++}^{(\Av, \mathcal{X})} &=  \{\yv = \exp (\Av \xv): \xv \in \mathcal{X} \} \\
&= \{ \yv \in \mathbb{R}_{++}^n: \;  y_{n+1} = 1, \; y_j^{\lambda^{(j)}_j} - \prod_i^n y_i ^{\lambda^{(j)}_i} = 0 \;\; \forall j=n+2...l \\  & \quad\quad\quad c^{(k)} \prod_{i \; : \; \gamma^{(k)}_i < 0} y_i^{-\gamma^{(k)}_i} - \prod_{i \; : \; \gamma^{(k)}_i > 0} y_i^{\gamma^{(k)}_i} \geq 0 \; \forall k \in K_{\mathcal{X}} \} \\
&=  \{ \yv \in \mathbb{R}_{++}^n: \; y_{n+1} = 1, \; p_1^{(j)}(\yv) - p_2^{(j)}(\yv) \geq 0 \; \forall j = n+2...l, \\ & \quad\quad\quad q_1^{(k)}(\yv) - q_2^{(k)}(\yv) \geq 0 \; \forall k \in K_{\mathcal{X}} \},
\end{align*}
where in the last expression we have written the terms abstractly. Each of $p_1^{(j)}(\yv)$, $p_2^{(j)}(\yv)$, $q_1^{(k)}(\yv)$, $q_2^{(k)}(\yv)$ is monomial. By construction, we have that $\xv \in \mathcal{X} \iff \yv \in \mathcal{T}_{++}^{(\Av, \mathcal{X})}$. \\\\
The change of variable $\yv = \exp(\Av \xv)$ subject to $\xv \in \mathcal{X}$ reduces signomial positivity over a compact convex set to polynomial positivity over the intersection of the positive orthant and a set defined by (possible infinite) polynomial inequalities. In summary, we have shown the following:
\begin{align*}
\text{Sig}(\cv, \Av)(\xv) > 0 \; \forall \xv \in \mathcal{X} \implies \cv^\top \yv > 0 \; \forall \yv \in \mathcal{T}_{++}^{(\Av, \mathcal{X})}.
\end{align*}
\subsection{Positivity to Positivstellensatz}
\label{sec:posivity_to_Positivstellensatz}
We recall however that Dickson’s Positivstellensatz assumes polynomial positivity over the intersection of the nonnegative orthant and a set defined by finite homogeneous polynomial inequalities, excluding the origin. Namely, there are three conditions that $\mathcal{T}_{++}^{(\Av, \mathcal{X}) }$ does not satisfy. \\
\begin{enumerate}
    \item $\mathcal{T}_{++}^{(\Av, \mathcal{X}) }$ is the intersection of the positive orthant and a set of polynomial inequalities.
    \item $\mathcal{T}_{++}^{(\Av, \mathcal{X}) }$ is defined by polynomials that are possibly non-homogeneous.
    \item $\mathcal{T}_{++}^{(\Av, \mathcal{X}) }$ is defined by possibly infinite polynomials. \\
\end{enumerate}  
The goal of this section is to describe the modifications to the set $\mathcal{T}_{++}^{(\Av, \mathcal{X}) }$ such that the resulting set satisfies the premises of Dickson’s Positivstellensatz, while the positivity of $\cv^\top \yv$ is preserved on the modified set. That is, there exists a set $\mathcal{T}^{{(\Av, \mathcal{X})}*}$, the intersection of the nonnegative orthant and a semialgebraic set defined by homogeneous polynomials such that:
\begin{align*}
     \cv^\top \yv > 0 \; \forall \yv \in \mathcal{T}_{++}^{(\Av, \mathcal{X})} \implies \cv^\top \yv > 0 \; \forall \yv \in \mathcal{T}^{{(\Av, \mathcal{X})}*} \backslash \{ \mathbf{0} \}.
\end{align*}
\subsubsection{Intersection with the Nonnegative Orthant}
In this subsection, we make modifications to $\mathcal{T}_{++}^{(\Av, \mathcal{X})}$ so that the resulting set is the intersection of the nonnegative orthant and a set of polynomial inequalities. First, consider the following set that extends $\mathcal{T}_{++}^{(\Av, \mathcal{X})}$ to the nonnegative orthant. 
\begin{align*}
\mathcal{T}_{+}^{(\Av, \mathcal{X})} &= \{ \yv \in \mathbb{R}_{+}^n: \; y_{n+1} = 1, \; p_1^{(j)}(\yv) - p_2^{(j)}(\yv) \geq 0 \; \forall j = n+2...\ell, \\ &\quad\quad\quad q_1^{(k)}(\yv) - q_2^{(k)}(\yv) \geq 0 \; \forall k \in K_{\mathcal{X}} \}.
\end{align*}
Surprisingly, after the modification, the positivity of $\cv^\top \yv$ is not preserved on the extended set.
\begin{proposition}
\label{prop:nonegative_orthant_counter_example}
    There exists $\Av \in \mathbb{Q}^{\ell \times n}$, $\cv \in \mathbb{R}^{\ell}$ and compact convex set $\mathcal{X}$ such that:
    \begin{align*}
        \cv^\top \yv > 0 \; \forall \yv \in \mathcal{T}_{++}^{(\Av, \mathcal{X})} \not\implies \cv^\top \yv > 0 \; \forall \yv \in \mathcal{T}_{+}^{{(\Av, \mathcal{X})}} \backslash \{ \mathbf{0} \}.
    \end{align*}
\end{proposition}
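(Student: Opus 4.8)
The plan is to prove this by producing an explicit counterexample $(\Av,\cv,\mathcal{X})$. The phenomenon to exploit is that, under $\yv=\exp(\Av\xv)$, a halfspace $\wv^\top\xv\le d$ becomes a monomial inequality $c^{(k)}\prod_{\gamma_i<0}y_i^{-\gamma_i}-\prod_{\gamma_i>0}y_i^{\gamma_i}\ge 0$, and the equivalence of this family of inequalities with ``$\yv$ belongs to the image of $\mathcal{X}$'' holds only on the open orthant $\mathbb{R}^\ell_{++}$ (\cf Section~\ref{sec:variable_change}). On a face $\{y_i=0:i\in S\}$ of $\mathbb{R}^\ell_+$ such a binomial inequality can become vacuous --- one side vanishing without the other being forced to zero --- so $\mathcal{T}^{(\Av,\mathcal{X})}_+$ can reach faces of the nonnegative orthant that are disjoint from $\mathcal{T}^{(\Av,\mathcal{X})}_{++}$. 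It then suffices to arrange such a face to contain a point at which $\cv^\top\yv\le 0$ while $\text{Sig}(\cv,\Av)$ remains strictly positive on $\mathcal{X}$ itself.

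The obstruction to watch for: because $\mathcal{X}$ is compact, $\exp(\Av_i\xv)$ is bounded below on $\mathcal{T}^{(\Av,\mathcal{X})}_{++}$, and the lower-bounding facets of $\mathcal{X}$ that enforce this typically translate into monomial inequalities that re-block any attempt to send a coordinate to $0$. The device that overcomes this is to put into $\Av$ a redundant exponent vector equal to a difference of two others, say $\Av_\ell=\Av_1-\Av_2$, so that the coordinate $y_\ell$ vanishes along the same direction as $y_1$; a suitably tilted lower-bounding facet of $\mathcal{X}$ can then be made to translate into a monomial inequality all of whose monomials vanish on the intended face $\{y_1=y_\ell=0\}$.

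Concretely I would take $n=2$, $\ell=4$, $\Av_1=(1,0)$, $\Av_2=(0,1)$, $\Av_3=(0,0)$ (the distinguished zero row), $\Av_4=(1,-1)$, coefficient vector $\cv=(1,0,-\tfrac14,0)$ --- so $\text{Sig}(\cv,\Av)(\xv)=e^{x_1}-\tfrac14$ --- and for $\mathcal{X}$ the triangle $\{\xv\in\mathbb{R}^2:x_1\le 0,\ x_2\le 0,\ x_1+2x_2\ge -1\}$. Since $x_1\in[-1,0]$ on $\mathcal{X}$, we have $\text{Sig}(\cv,\Av)=e^{x_1}-\tfrac14\ge e^{-1}-\tfrac14>0$, so $\cv^\top\yv>0$ on $\mathcal{T}^{(\Av,\mathcal{X})}_{++}$. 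Running the reduction of Section~\ref{sec:variable_change}, the relation $\Av_4=\Av_1-\Av_2$ contributes $y_1-y_2y_4=0$, the row $\Av_3=\mathbf{0}$ contributes $y_3=1$, and the three facets of $\mathcal{X}$ contribute, after a routine computation, the monomial inequalities $1-y_1^2y_2y_4\ge 0$, $y_4-y_1y_2^2\ge 0$, and $e^{3}y_1^4y_2^5-y_4\ge 0$. One then checks that $\yv^*=(0,0,1,0)$ satisfies all of these --- the three monomial inequalities hold trivially at $\yv^*$, and $y_3=1$ and $y_1-y_2y_4=0$ are immediate --- so $\yv^*\in\mathcal{T}^{(\Av,\mathcal{X})}_+\setminus\{\mathbf{0}\}$, while $\cv^\top\yv^*=-\tfrac14<0$. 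This is exactly the implication failing.

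The step I expect to require the most care is the choice of $\mathcal{X}$: it has to be compact --- which forces its facet normals to positively span $\mathbb{R}^2$, hence forces a lower-bounding facet --- while that lower-bounding facet must, after exponentiation, turn into a monomial inequality supported only on the coordinates $\{y_1,y_4\}$ that get zeroed. The triangle above is tuned so that its facet $x_1+2x_2\ge -1$ translates to $e^3y_1^4y_2^5-y_4\ge 0$, which holds at $\yv^*$; once that is arranged, verifying that the remaining facets and the exponent relation do not re-block $\yv^*$ is a short computation.
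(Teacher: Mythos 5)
Your proposal is correct and follows the same route as the paper: both exhibit an explicit counterexample in which a point on the boundary of the nonnegative orthant vacuously satisfies every binomial inequality (each monomial appearing in them vanishes) while $\cv^\top\yv<0$ there, and your computations with $\Lv=(\Av^\top\Av)^{-1}\Av^\top$ check out. The paper's instance is somewhat simpler---it takes $\ell=3$, $\cv=(1,1,-1)^\top$, and the tilted halfspaces $-2x_1+x_2\le 0$, $x_1-2x_2\le 0$, $x_1+x_2\le 1$, so that $y_1^2\ge y_2$ and $y_2^2\ge y_1$ force $y_1,y_2\ge 1$ on the open orthant yet admit $(0,0,1)$ on its closure, with no need for your redundant exponent row $\Av_4=\Av_1-\Av_2$---but the mechanism is identical.
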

\begin{proof}
We prove this by an example. Let $\Av = [\av_1, \av_2, \av_3]^\top$ where $\av_1 = [1, 0]^\top, \av_2 = [0, 1]^\top, \av_3 = [0, 0]^\top$, $\cv = [1,1, -1]^\top$, $\mathcal{X} = \{\xv \in \mathbb{R}^3: -2x_1 + x_2 \leq 0, \; x_1 - 2x_2 \leq 0, \; x_1 + x_2 \leq 1 \}$. $\mathcal{X}$ is conveniently defined to be a compact intersection of halfspaces. \\\\
Let $\yv = \exp (\Av \xv)$. We have:
\begin{align*}
    y_1 = \exp (x_1), \; y_2 = \exp(x_2), \; y_3 = 1
\end{align*}
Also:
\begin{align*}
    \xv \in \mathcal{X} &\iff -2x_1 + x_2 \leq 0,  x_1 - 2x_2 \leq 0 \mbox{ and } x_1 + x_2 \leq 1 \\
    &\iff -2 \log y_1 + \log y_2 \leq 0, \log y_1 - 2 \log y_2 \leq 0 \mbox{ and } \log y_1 + \log y_2 \leq 1 \\
    &\iff \log (y_1^{-2}y_2) \leq 0,  \log (y_1 y_2^{-2}) \leq 0  \text{ and } \log(y_1y_2) \leq 1 \\
    &\iff y_1^2 - y_2 \geq 0, y_2^2 - y_1  \geq 0 \text{ and } e - y_1y_2 \geq 0.
\end{align*}
Therefore $\mathcal{T}_{++}^{(\Av, \mathcal{X}) }  = \{ \yv \in \mathbb{R}_{++}^3: y_1^2 - y_2 \geq 0, \; y_2^2 - y_1 \geq 0, \; e - y_1y_2 \geq 0 \}$ and $\mathcal{T}_{+}^{(\Av, \mathcal{X}) }  = \{ \yv \in \mathbb{R}_{+}^3: \; y_1^2 - y_2 \geq 0, \; y_2^2 - y_1 \geq 0, \; e - y_1y_2 \geq 0 \}$. \\\\
We may check that $\cv^\top \yv \geq 1 \; \forall \yv \in \mathcal{T}_{++}^{(\Av, \mathcal{X})}$, as the constraints $y_1^2 - y_2 \geq 0, \; y_2^2 - y_1 \geq 0$ may only be satisfied when $y_1, y_2 \geq 1$ given $\yv > \mathbf{0}$. However, letting $\bar{\yv} = [0, 0, 1]^\top \in \mathcal{T}_+^{{(\Av, \mathcal{X})}}$, $\cv^\top 
\bar{\yv} = -1 < 0$. 
\end{proof} \newline
While $\yv = \exp(\Av \xv)$ implies $\yv \in \mathbb{R}^n_{++}$, the resulting polynomial inequalities after the algebraic operations do not. Explicit restriction of $\mathcal{T}_{++}^{(\Av, \mathcal{X})}$ to $\mathbb{R}^n_{++}$ retains the constraint, but extending the definition to the nonnegative orthant includes "jumps" with some coordinates of $\yv$ being zero to satisfy the polynomial inequalities.  \\\\
We may avoid such jumps by adding redundant constraints due to the compactness of $\mathcal{X}$. $\mathcal{X}$ being compact implies $\exp(\Av (\mathcal{X}))$ is compact, since the latter is a continuous map on $\mathcal{X}$ which preserves compactness. $\yv \in \exp(\Av (\mathcal{X}))$. So we may find lower and upper bounds for each $y_i$, $l_i \leq y_i\leq u_i$. Define the following set:
\begin{align*}
    \Upsilon = \{\yv: u_i - y_i \geq 0, \; y_i - l_i \geq 0 \quad \forall i  = 1 \ldots \ell \}.
\end{align*}
$\Upsilon$ consists of polynomial inequalities and is contained in the positive orthant since $\yv \notin \Upsilon$ if $y_i = 0$ for any $i$. By definition of $\ell_i$ and $u_i$, $\Upsilon \supseteq \mathcal{T}_{++}^{(\Av, \mathcal{X})}$. Thus $\mathcal{T}_{+}^{(\Av, \mathcal{X}) } \cap \Upsilon = \mathcal{T}_{++}^{(\Av, \mathcal{X})}$, where $\mathcal{T}_{+}^{(\Av, \mathcal{X})} \cap \Upsilon$ is the intersection of the nonnegative orthant and polynomial equations. In other words, while $\mathcal{T}_{++}^{(\Av, \mathcal{X})}$ was the intersection of the positive orthant and a set of polynomial inequalities, we have written an equivalent set $\mathcal{T}_{+}^{(\Av, \mathcal{X}) } \cap \Upsilon$ which is the intersection of the nonnegative orthant and a set of polynomial inequalities. 

\subsubsection{Homogeneous Polynomials Inequalities}
\label{sec:positivity_over_homogeneous polynomials}
In this subsection, we show a modification to the polynomials in the definition of $\mathcal{T}_+^{(\Av, \mathcal{X})} \cap \Upsilon$ so that the resulting polynomials are homogeneous. We then show that the positivity of $\cv^\top \yv$ is preserved on the modified set. 
\begin{align*}
\mathcal{T}_+^{(\Av, \mathcal{X})} \cap \Upsilon &=  \{ \yv \in \mathbb{R}_{+}^n: \; y_{n+1} = 1, \; p_1^{(j)}(\yv) - p_2^{(j)}(\yv) \geq 0 \; \forall j = n+2...l, \\ & \quad\quad q_1^{(k)}(\yv) - q_2^{(k)}(\yv) \geq 0 \; \forall k \in K_{\mathcal{X}}, \\ & \quad\quad u_i - y_i \geq 0, \; y_i - l_i \geq 0 \; \forall i  = 1 \ldots \ell \}.
\end{align*}
We modify the polynomials to be homogeneous by making the following transformation. Let $[x]_+ = \max(x, 0)$.
\begin{align*}
\widetilde{\mathcal{T}}_+^{(\Av, \mathcal{X})} \cap \widetilde{\Upsilon} &=  \{ \yv \in \mathbb{R}_{+}^n: \\ & \quad\quad\quad\quad y_{n+1}^{[\text{deg}(p_2^{(j)}) - \text{deg}(p_1^{(j)})]_+}p_1^{(j)}(\yv) - y_{n+1}^{[\text{deg}(p_1^{(j)}) - \text{deg}(p_2^{(j)})]_+} p_2^{(j)}(\yv) = 0 \; \forall j = n+2...l \\
& \quad\quad\quad\quad y_{n+1}^{[\text{deg}(q_2^{(k)}) - \text{deg}(q_1^{(k)})]_+} q_1^{(k)}(\yv) - y_{n+1}^{[\text{deg}(q_1^{(k)}) - \text{deg}(q_2^{(k)})]_+} q_2^{(k)}(\yv) \geq 0 \; \forall k \in K_{\mathcal{X}}, \\
& \quad\quad\quad\quad u_i y_{n+1} - y_i \geq 0, \; y_i - v_i y_{n+1} \geq 0 \; \forall i  = 1 \ldots \ell \}.
\end{align*}
We have multiplied $y_{n+1}$ to appropriate terms so that resulting polynomials are homogeneous. We have also removed the condition $y_{n+1} = 1$ as it is not a homogeneous polynomial and cannot be modified so. The set has been modified considerably. We claim the following:
\begin{theorem}
\label{thm:homogeneous_polynomials}
Consider a signomial $\text{Sig}(\cv, \Av)$ and compact convex set $\mathcal{X}$ as in \cref{thm:main_result}. Then:
\begin{align*}
    \cv^\top\yv > 0 \;\; \forall \yv \in (\mathcal{T}_+^{(\Av, \mathcal{X})}\cap \Upsilon)  \implies \cv^\top\yv > 0 \;\; \forall \yv \in ({\widetilde{\mathcal{T}}_+^{(\Av, \mathcal{X})}} \cap \widetilde{\Upsilon}) \backslash \{ \mathbf{0} \}.
\end{align*}
\end{theorem}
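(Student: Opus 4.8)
The key structural observation is that, by construction, every polynomial defining $\widetilde{\mathcal{T}}_+^{(\Av, \mathcal{X})} \cap \widetilde{\Upsilon}$ is homogeneous: the pair $p_1^{(j)}, p_2^{(j)}$ (respectively $q_1^{(k)}, q_2^{(k)}$) has been rescaled by the complementary powers $y_{n+1}^{[\text{deg}(p_2^{(j)})-\text{deg}(p_1^{(j)})]_+}$ and $y_{n+1}^{[\text{deg}(p_1^{(j)})-\text{deg}(p_2^{(j)})]_+}$, so that both summands now have the common degree $\max(\text{deg}(p_1^{(j)}), \text{deg}(p_2^{(j)}))$, while the box constraints $u_i y_{n+1} - y_i \geq 0$ and $y_i - l_i y_{n+1} \geq 0$ are homogeneous of degree one. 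Hence $\widetilde{\mathcal{T}}_+^{(\Av, \mathcal{X})} \cap \widetilde{\Upsilon}$ is a cone, invariant under $\yv \mapsto t\yv$ for every $t > 0$. Since $\cv^\top \yv$ is itself homogeneous of degree one, the plan is to check its sign on a single representative of each ray, normalizing via the coordinate $y_{n+1}$, because $\mathcal{T}_+^{(\Av, \mathcal{X})} \cap \Upsilon$ is precisely the slice $\{y_{n+1} = 1\}$ of the homogenized set.

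Given $\yv \in (\widetilde{\mathcal{T}}_+^{(\Av, \mathcal{X})} \cap \widetilde{\Upsilon}) \backslash \{\mathbf{0}\}$, I would split on the value of $y_{n+1}$. If $y_{n+1} = 0$, then the homogenized box constraint $u_i y_{n+1} - y_i \geq 0$ forces $y_i \leq 0$ for every $i = 1 \ldots \ell$, and combined with $\yv \geq \mathbf{0}$ this yields $\yv = \mathbf{0}$, a contradiction; hence $y_{n+1} > 0$. This is exactly the point at which the redundant box $\widetilde{\Upsilon}$, inherited from the compactness of $\mathcal{X}$, is indispensable: without it the homogenization would create spurious rays on the face $\{y_{n+1} = 0\}$, which is the pathology isolated in \cref{prop:nonegative_orthant_counter_example}.

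When $y_{n+1} > 0$, set $\yv' := \yv / y_{n+1}$, so that $y'_{n+1} = 1$ and $\yv' \geq \mathbf{0}$. Each homogenized constraint $H$ is homogeneous of some degree $d$, so $H(\yv) = y_{n+1}^d H(\yv')$ with $y_{n+1}^d > 0$; thus $\yv'$ satisfies the same system of (in)equalities as $\yv$, and evaluating that system at $y'_{n+1} = 1$ cancels the auxiliary powers of $y_{n+1}$ and recovers verbatim the $p$-, $q$- and box-constraints defining $\mathcal{T}_+^{(\Av, \mathcal{X})} \cap \Upsilon$; since $\yv'$ also satisfies $y_{n+1} = 1$ by construction, we get $\yv' \in \mathcal{T}_+^{(\Av, \mathcal{X})} \cap \Upsilon$. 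By hypothesis $\cv^\top \yv' > 0$, and therefore $\cv^\top \yv = y_{n+1}(\cv^\top \yv') > 0$, which is the claim.

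The argument is short once the cone structure is recognized; the only part requiring care is the degree bookkeeping in the homogenization — checking that the complementary $y_{n+1}$-powers genuinely equalize $\text{deg}(p_1^{(j)})$ and $\text{deg}(p_2^{(j)})$ (and similarly for the $q$'s and the box), and that setting $y_{n+1} = 1$ inverts the operation exactly — together with the observation, used above, that the box constraints are precisely what excludes the $y_{n+1} = 0$ face. No appeal to compactness is needed here beyond the finiteness and positivity of the bounds $l_i, u_i$ already exploited to construct $\Upsilon$.
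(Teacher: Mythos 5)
Your proposal is correct and follows essentially the same route as the paper's own proof: exclude $y_{n+1}=0$ via the homogenized box constraints from $\widetilde{\Upsilon}$, rescale to $\yv/y_{n+1}$ using the cone property of the homogeneous system, identify the slice $y_{n+1}=1$ with $\mathcal{T}_+^{(\Av,\mathcal{X})}\cap\Upsilon$, and conclude by homogeneity of $\cv^\top\yv$. Your write-up is somewhat more explicit than the paper's about why $y_{n+1}=0$ forces $\yv=\mathbf{0}$ and about the degree bookkeeping, but there is no substantive difference in approach.
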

\begin{proof}
Consider some $\yv \in (\widetilde{\mathcal{T}}_+^{(\Av, \mathcal{X})} \cap \widetilde{\Upsilon}) \backslash \{ \mathbf{0} \}$. Then $y_{n+1} \neq 0$ or otherwise $\yv = \mathbf{0}$ by the constraints $u_i y_{n+1}  - y_i \geq 0 \; \forall i$. Let $\widetilde{\yv} = \yv / y_{n+1}$. Since $\widetilde{\mathcal{T}}_+^{(\Av, \mathcal{X})} \cap \widetilde{\Upsilon}$ is a semialgebraic set defined by homogeneous polynomials, it is closed under positive scaling thus $\widetilde{\yv} \in \widetilde{\mathcal{T}}_+^{(\Av, \mathcal{X})} \cap \widetilde{\Upsilon}$. Since $\widetilde{\yv}_{n+1} = 1$, the conditions for $\widetilde{\mathcal{T}}_+^{(\Av, \mathcal{X})} \cap \widetilde{\Upsilon}$ reduces to conditions for $\mathcal{T}_+^{(\Av, \mathcal{X})} \cap \Upsilon$, and thus $\widetilde{\yv} \in \mathcal{T}_+^{(\Av, \mathcal{X})} \cap \Upsilon$. By assumption $\cv^\top\widetilde{\yv} > 0$. Since $y_{n+1} > 0$, $\cv^\top\yv = \cv^\top(y_{n+1} \widetilde{\yv}) > 0$.
\end{proof}  \newline
\subsubsection{Infinite to Finite Polynomial Inequalities}
\label{sec:finite_polynomials}
${\widetilde{\mathcal{T}}_+^{(\Av, \mathcal{X})}} \cap \widetilde{\Upsilon}$ is defined by possibly infinite polynomial inequalities. In this subsection, we want to show that the positivity of $\cv^\top \yv$ over such set implies its positivity over a set defined by finite polynomial inequalities. \\\\
Consider the following theorem, extended from the original statement in \cite{DickinsonPovh14}.
\begin{theorem}
\label{thm:infinite_to_finite}
    Consider a set of homogeneous polynomials $\{f_0\} \cup \{ f_i \; | \; i \in I \} \subseteq \mathbb{R}[\xv]$ with infinite cardinality. If $f_0(\xv) > 0$ for all $\xv \in \mathbb{R}^n_{+} \cap \bigcap_{i \in I} f_i^{-1}(\mathbb{R}_+) \backslash \{\mathbf{0}\}$, there exists a subset $J \subseteq I$ of finite cardinality such that $f_0(\xv) > 0$ for all $\xv \in \mathbb{R}^n_{+} \cap \bigcap_{i \in J} f_i^{-1}(\mathbb{R}_+) \backslash \{\mathbf{0}\}$.
\end{theorem}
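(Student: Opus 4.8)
The plan is to exploit homogeneity to turn this into a routine compactness argument on the unit sphere. Write $d_i = \text{deg}(f_i)$ for $i \in \{0\} \cup I$, and let $S = \{\xv \in \mathbb{R}^n_{+} : \lVert \xv \rVert = 1\}$, which is compact as the intersection of the closed nonnegative orthant with the unit sphere. Because each $f_i$ is homogeneous and $\mathbf{0} \notin S$, for every $\xv \neq \mathbf{0}$ we have $f_i(\xv) \geq 0 \iff f_i(\xv/\lVert\xv\rVert) \geq 0$ and $f_0(\xv) > 0 \iff f_0(\xv/\lVert\xv\rVert) > 0$, using only that $\lVert\xv\rVert > 0$ and homogeneity with a positive scalar (so the parity of the $d_i$ is irrelevant). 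Hence the hypothesis is equivalent to $f_0 > 0$ on $S \cap \bigcap_{i \in I} f_i^{-1}(\mathbb{R}_+)$, and the desired conclusion is equivalent to producing a finite $J \subseteq I$ with $f_0 > 0$ on $S \cap \bigcap_{i \in J} f_i^{-1}(\mathbb{R}_+)$.

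Next I would introduce $C = \{\xv \in S : f_0(\xv) \leq 0\}$ and, for each $i \in I$, the set $U_i = \{\xv \in S : f_i(\xv) < 0\}$. By continuity of $f_0$ and compactness of $S$, $C$ is a closed subset of $S$, hence compact; each $U_i$ is open in $S$. The reformulated hypothesis says exactly that no point of $C$ lies in $\bigcap_{i \in I} f_i^{-1}(\mathbb{R}_+)$ — i.e.\ every $\xv \in C$ satisfies $f_i(\xv) < 0$ for some $i \in I$ — which is precisely the statement that $\{U_i : i \in I\}$ is an open cover of $C$.

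Then I would invoke compactness of $C$ to extract a finite subcover $C \subseteq \bigcup_{i \in J} U_i$ with $J \subseteq I$ finite (this works regardless of whether $I$ is countable). This yields $C \cap \bigcap_{i \in J} f_i^{-1}(\mathbb{R}_+) = \emptyset$, i.e.\ $f_0 > 0$ on $S \cap \bigcap_{i \in J} f_i^{-1}(\mathbb{R}_+)$. Finally I would undo the spherical reduction: for any $\xv \in \mathbb{R}^n_{+} \cap \bigcap_{i \in J} f_i^{-1}(\mathbb{R}_+) \backslash \{\mathbf{0}\}$, homogeneity gives $\xv/\lVert\xv\rVert \in S \cap \bigcap_{i \in J} f_i^{-1}(\mathbb{R}_+)$, hence $f_0(\xv/\lVert\xv\rVert) > 0$, and therefore $f_0(\xv) = \lVert\xv\rVert^{d_0} f_0(\xv/\lVert\xv\rVert) > 0$, which is the claim. (If $C = \emptyset$, take $J = \emptyset$; the conclusion degenerates to $f_0 > 0$ on all of $\mathbb{R}^n_{+} \backslash \{\mathbf{0}\}$, consistent with the hypothesis.)

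There is no deep obstacle here; the only delicate points are bookkeeping ones: confirming that excluding the origin is automatically handled once one passes to $S$, and checking that the sign equivalences between $\xv$ and $\xv/\lVert\xv\rVert$ genuinely require nothing beyond positive-scalar homogeneity. The essential content is simply that homogeneous polynomial sign conditions are determined by their restriction to the compact set $S$, so an infinite family of closed/open conditions reduces to a finite one.
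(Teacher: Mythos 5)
Your proof is correct, and it reaches the conclusion by a cleaner mechanism than the paper's. Both arguments share the same first step: use positive-scalar homogeneity to restrict all sign conditions to the compact set $S = \{\xv \in \mathbb{R}^n_+ : \lVert\xv\rVert = 1\}$ (the paper's $\Omega$). Where you diverge is in how the finite subset $J$ is extracted. You observe that the hypothesis says the open sets $U_i = \{\xv \in S : f_i(\xv) < 0\}$ cover the compact ``bad'' set $C = \{\xv \in S : f_0(\xv) \leq 0\}$, and you take a finite subcover --- a purely topological argument needing nothing beyond continuity of each $f_i$. The paper instead (following Dickinson--Povh) normalizes the gradients so that the family $\{f_i\}_{i\in I}$ is uniformly $1$-Lipschitz on $S$, shows that $\xi(\xv) = \sup_{i\in I}(-f_i(\xv))$ is therefore continuous and attains a positive minimum $\epsilon$ on $\Omega_0 = C$, and then runs a greedy selection: each chosen index $i$ excludes an $\epsilon/3$-ball around the witness point, so total boundedness of $\Omega_0$ forces termination after finitely many steps. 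The paper's route is more quantitative in flavor (in principle it bounds $|J|$ by a covering number of $\Omega_0$), but it requires the equicontinuity normalization to make the supremum of infinitely many functions continuous --- a step your argument avoids entirely, since the finite-subcover property needs no uniformity across the family. Your handling of the edge cases (the origin, $C = \emptyset$, and the irrelevance of the parities of the degrees) is also sound.
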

The proof of the above theorem is left in the appendix, and is adapted from \cite{DickinsonPovh14} as well. In other words, if a homogeneous polynomial is positive over the intersection of infinite homogeneous polynomial inequalities, it is positive over the intersection of some finite subset of such inequalities. \\\\
In \cref{thm:infinite_to_finite}, let $\mathbb{R}_+^n \cap \bigcap_{i \in I}^m f^{-1}_i(\mathbb{R}_+) = \widetilde{\mathcal{T}}_+^{(\Av, \mathcal{X})} \cap \widetilde{\Upsilon}$ and $\mathbb{R}_+^n \cap \bigcap_{i \in J} f_i^{-1}(\mathbb{R}_+) = \mathcal{T}_+^{(\Av, \mathcal{X})*}$. Then we have that $\cv^\top\yv > 0 \; \forall \yv \in (\widetilde{\mathcal{T}}_+^{(\Av, \mathcal{X})} \cap \widetilde{\Upsilon}) \backslash \{ \mathbf{0} \} \implies  \cv^\top\yv > 0 \; \forall \yv \in \mathcal{T}_+^{(\Av, \mathcal{X})*} \backslash \{ \mathbf{0} \}$. Although the proof is non-constructive, the the finite polynomials defining $\mathcal{T}_+^{(\Av, \mathcal{X})*}$ are a subset of the ones defining $\widetilde{\mathcal{T}}_+^{(\Av, \mathcal{X})} \cap \widetilde{\Upsilon}$. \\\\
With the above, we have completed the reduction as below:
\begin{align*}
     \text{Sig}(\cv, \Av)(\xv) > 0 \; \forall \xv \in \mathcal{X}  &\implies \cv^\top \yv > 0 \; \forall \yv \in {\mathcal{T}}_{++}^{(\Av, \mathcal{X})} \\
    &\implies \cv^\top \yv > 0 \; \forall \yv \in \mathcal{T}_+^{(\Av, \mathcal{X})} \cap \Upsilon \\
    &\implies \cv^\top \yv > 0 \; \forall \yv \in (\widetilde{\mathcal{T}}_+^{(\Av, \mathcal{X})} \cap \widetilde{\Upsilon}) \backslash \{ \mathbf{0} \} \\
    &\implies \cv^\top\yv > 0 \; \forall \yv \in \mathcal{T}_+^{(\Av, \mathcal{X})*} \backslash \{ \mathbf{0} \},
\end{align*} 
where $\mathcal{T}_+^{(\Av, \mathcal{X})*}$ is the intersection of the nonnegative orthant and a semialgebraic set defined by finite homogeneous polynomials, as desired. In addition, we have the following relation. 
\begin{theorem}
\label{thm:change_of_variable_equivalence}
Consider any $\Av \in \mathbb{Q}^{\ell \times n}$ and $\mathcal{X}$ as in \cref{thm:main_result}. With change of variable $\yv = \exp(\Av \xv)$:
\begin{align*}
    \xv \in \mathcal{X} \implies \yv \in \mathcal{T}_+^{(\Av, \mathcal{X})*} \backslash \{ \mathbf{0} \}.
\end{align*}
\end{theorem}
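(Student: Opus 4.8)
The plan is to obtain the stated implication purely by chaining together the set identities and inclusions already established in \cref{sec:variable_change} and in the proofs of \cref{thm:homogeneous_polynomials} and \cref{thm:infinite_to_finite}; no new estimate is needed, so the argument is essentially bookkeeping. First I would recall that the change of variable was built precisely so that $\xv \in \mathcal{X} \iff \yv = \exp(\Av \xv) \in \mathcal{T}_{++}^{(\Av, \mathcal{X})}$, and that the compactness of $\mathcal{X}$ produced the bounds $l_i \leq y_i \leq u_i$ that give $\mathcal{T}_{++}^{(\Av, \mathcal{X})} = \mathcal{T}_+^{(\Av, \mathcal{X})} \cap \Upsilon$. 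Hence $\xv \in \mathcal{X}$ already forces $\yv \in \mathcal{T}_+^{(\Av, \mathcal{X})} \cap \Upsilon$.

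Next I would verify the inclusion $\mathcal{T}_+^{(\Av, \mathcal{X})} \cap \Upsilon \subseteq \widetilde{\mathcal{T}}_+^{(\Av, \mathcal{X})} \cap \widetilde{\Upsilon}$, which is the one place a short argument is required. Any $\yv$ in the left-hand set lies in $\mathbb{R}_+^n$ and satisfies $y_{n+1} = 1$; indeed $y_{n+1} = \exp(\Av_{n+1}\xv) = \exp(\mathbf{0}) = 1$ by the standing assumption $\Av_{n+1} = \mathbf{0}$. Substituting $y_{n+1} = 1$ into the defining constraints of $\widetilde{\mathcal{T}}_+^{(\Av, \mathcal{X})} \cap \widetilde{\Upsilon}$, every factor of $y_{n+1}$ raised to a nonnegative integer power collapses to $1$, so these homogenized constraints reduce term by term to the (non-homogeneous) constraints defining $\mathcal{T}_+^{(\Av, \mathcal{X})} \cap \Upsilon$, which $\yv$ satisfies by hypothesis. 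This is exactly the converse of the dehomogenization computation already carried out in the proof of \cref{thm:homogeneous_polynomials}.

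Finally, by the construction in \cref{thm:infinite_to_finite}, the set $\mathcal{T}_+^{(\Av, \mathcal{X})*}$ is cut out by the nonnegative-orthant membership together with a finite \emph{subset} of the homogeneous polynomial inequalities defining $\widetilde{\mathcal{T}}_+^{(\Av, \mathcal{X})} \cap \widetilde{\Upsilon}$; discarding constraints only enlarges the feasible region, so $\widetilde{\mathcal{T}}_+^{(\Av, \mathcal{X})} \cap \widetilde{\Upsilon} \subseteq \mathcal{T}_+^{(\Av, \mathcal{X})*}$. Composing the three steps yields $\yv \in \mathcal{T}_+^{(\Av, \mathcal{X})*}$, and $\yv \neq \mathbf{0}$ since $y_{n+1} = 1 \neq 0$. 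I do not anticipate a genuine obstacle here; the only point demanding care is the homogenization inclusion, namely confirming that imposing $y_{n+1} = 1$ really does recover the original constraints, degree-balancing exponents included, which is immediate from the way the homogenization was set up.
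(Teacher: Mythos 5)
Your proposal is correct and follows essentially the same route as the paper's own proof: chain $\xv \in \mathcal{X} \iff \yv \in \mathcal{T}_{++}^{(\Av,\mathcal{X})} = \mathcal{T}_+^{(\Av,\mathcal{X})} \cap \Upsilon$, observe that $y_{n+1}=1$ renders the homogenization null so that $\yv \in \widetilde{\mathcal{T}}_+^{(\Av,\mathcal{X})} \cap \widetilde{\Upsilon}$, invoke the subset relation with $\mathcal{T}_+^{(\Av,\mathcal{X})*}$, and conclude $\yv \neq \mathbf{0}$. No gaps.
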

\begin{proof}
We have already shown $\xv \in \mathcal{X} \iff \yv \in \mathcal{T}_{++}^{(\Av, \mathcal{X})} = \mathcal{T}_{+}^{(\Av, \mathcal{X})} \cap \Upsilon$. We have $y_{n+1} = \exp (\Av_{n+1} \xv) = 1$ by assumption on $\Av$. By construction of $\widetilde{\mathcal{T}}_+^{(\Av, \mathcal{X})} \cap \widetilde{\Upsilon}$, if $\yv \in \mathcal{T}_{+}^{(\Av, \mathcal{X})} \cap \Upsilon$ with $y_{n+1} = 1$, then $\yv \in \widetilde{\mathcal{T}}_+^{(\Av, \mathcal{X})} \cap \widetilde{\Upsilon}$, as the modification is null when $y_{n+1} = 1$. $\widetilde{\mathcal{T}}_+^{(\Av, \mathcal{X})} \cap \widetilde{\Upsilon} \subseteq \mathcal{T}_+^{(\Av, \mathcal{X})*}$ since the latter is defined by a subset of the polynomials defining the former. So $\yv \in \mathcal{T}_+^{(\Av, \mathcal{X})*}$. Lastly, $\yv = \exp{\Av \xv} \neq \mathbf{0}$, which completes the proof.
\end{proof} \newline
\subsection{Positivstellensatz to Conditional SAGE}
In this subsection, we appeal to Dickson's Positivstellensatz to find a representation of the polynomial $p(\yv)$. Observe that all homogeneous polynomials defining $\mathcal{T}_+^{(\Av, \mathcal{X})*}$ are of the form $m_1(\yv) - m_2(\yv)$; the difference of two monomials. Without loss of generality, we may write:
\begin{align*}
    \mathcal{T}_+^{(\Av, \mathcal{X})*} = \{\yv \in \mathbb{R}^n_+: m_1^{(j)}(\yv) - m_2^{(j)}(\yv) \geq 0, \; j=1 \ldots m\}.
\end{align*}
As a consequence of Dickson's Positivstellensatz (\cref{dickson_positivstellensatz}), for some $p \in \mathbb{Z}_+$, there exists homogeneous polynomials with nonnegative coefficients $(g_i (\yv))_{i=0 \ldots m}$ such that:
\begin{align*}
    (\exp(\Av \xv)^\top \mathbf{1})^p \sum_{j=1}^{\ell} c_j \exp (\Av_j \xv) &= (\yv^\top \mathbf{1})^p \cv^\top\yv \\
    &= g_0(\yv) + \sum_{j=1}^m g_j (\yv) (m_1^{(j)}(\yv) - m_2^{(j)}(\yv)).
\end{align*} 
We show explicitly below that the last expression is a summation of functions each of which is positive over $\mathcal{X}$ and has at most one negative term. By undoing the variable change $\yv = \exp(\Av \xv)$, this then implies that the first expression is a SAGE signomial. \\\\  
Dickson's Positivstellensatz does not guarantee the degrees of the polynomials $(g_i (\yv))_{i=0 \ldots m}$. A key observation is that $(\yv^\top \mathbf{1})^p \cv^\top\yv$ is a homogeneous polynomial of degree $p+1$. Thus it must be that $g_j (\yv) (m_1^{(j)}(\yv) - m_2^{(j)}(\yv))$ is homogeneous and $\text{deg} (g_j (\yv) (m_1^{(j)}(\yv) - m_2^{(j)}(\yv))) = p+1$ for each $j$, since we may without loss of generality ignore terms whose degrees do not equal $p+1$. Similarily, $\text{deg} (g_0(\yv))$ must be either $0$ or $p+1$. Moreover, for each $j$, without loss of generality, $g_j (\yv) = \sum_{k=1}^{\ell(j)} h_k^{(j)} (\yv)$, where $h_k^{(j)}$ is a monomial and $\ell(j)$ is the number of terms in polynomial $g_j (\yv)$. Then:
\begin{align*}
    (\yv^\top \mathbf{1})^p \cv^\top\yv &= g_0(\yv) + \sum_{j=1}^m g_j (\yv) (m_1^{(j)}(\yv) - m_2^{(j)}(\yv)) \\
    &= g_0(\yv) + \sum_{j=1}^m \sum_{k=1}^{\ell(j)} h_k^{(j)} (\yv) (m_1^{(j)}(\yv) - m_2^{(j)}(\yv)) \\
    &= g_0(\exp (\Av \xv)) + \sum_{j=1}^m \sum_{k=1}^{\ell(j)} o_k^{(j)}(\xv).
\end{align*}
Where $o_k^{(j)}(\xv) = h_k^{(j)} (\exp (\Av \xv)) (m_1^{(j)}(\exp (\Av \xv)) - m_2^{(j)}(\exp (\Av \xv))) = h_k^{(j)} (\yv) (m_1^{(j)}(\yv) - m_2^{(j)}(\yv))$ for all $j$ and $k$. One may verify that after undoing the variable change, the expression is indeed a signomial in the exponetial form. Now:
\begin{align*}
    \xv \in \mathcal{X} &\implies \yv = \exp (\Av \xv) \in \mathcal{T}_+^{(\Av, \mathcal{X})*} \\ &\implies 
    m_1^{(j)}(\yv) - m_2^{(j)}(\yv) \geq 0 \;\; \forall j = 1 \ldots m \\
    &\implies o_k^{(j)}(\xv) = h_k^{(j)} (\exp (\Av \xv))(m_1^{(j)}(\exp (\Av \xv)) - m_2^{(j)}(\exp (\Av \xv))) \geq 0 \;\; \forall j, k,
\end{align*}
where the first implication is due to \cref{thm:change_of_variable_equivalence}. Make the following observations for all $j$ and $k$: \\
\begin{itemize}
    \item $o_k^{(j)}(\xv) \geq 0 \; \forall \xv \in \mathcal{X}$, as implied by above.
    \item $o_k^{(j)}(\xv)$ has one negative term, namely $h_k^{(j)} (\exp \Av \xv)m_2^{(j)}(\exp \Av \xv)$. 
    \item Since $\text{deg} (g_j (\yv) (m_1^{(j)}(\yv) - m_2^{(j)}(\yv))) = p+1$, the exponentials of $o_k^{(j)}(\xv)$ are ones of $E_{p+1}(\Av)$. \\
\end{itemize}
These conditions imply that $o_k^{(j)}(\xv) \in \text{SAGE}(E_{p+1}(\Av), \mathcal{X})$ for all $j$ and $k$. $g_0(\exp (\Av \xv))$ is a signomial with positive coefficients so $g_0(\exp (\Av \xv)) \in  \text{SAGE}(E_{p+1}(\Av), \mathcal{X})$. $\text{SAGE}(E_{p+1}(\Av), \mathcal{X})$ is a cone and closed under addition. So $(\exp(\Av \xv)^\top \mathbf{1})^p \sum_{j=1}^{\ell} c_j \exp (\Av_j \xv) = g_0(\exp (\Av \xv)) + \sum_{j=1}^m \sum_{k=1}^{\ell(j)} o_k^{(j)}(\xv) \in S\text{AGE}(E_{p+1}(\Av), \mathcal{X})$. \\\\
The above completes proof of the main result (\cref{thm:main_result}). 

\section{Discussion and Future Work}
In this article we presented a Positivstellensatz for conditional SAGE signomials. Precisely, we established a convergent hierarchy of certificates for signomial positivity over a compact convex set. The proof takes a similar approach as recent Positivstellensatz results in reducing positivity of one form into another \cite{ChandrasekaranShah16} \cite{Ahmadi17}. In doing so, we used redundant constraints as done in previous proofs, but rather than appealing to representation theorems, the current proof reduces to Dickson's Positivstellensatz through algebraic operations. The redundant constraints in particular are used to avoid "jumps" when extending the aforementioned set to the intersection of the nonnegative orthant and polynomial inequalities, as well as to preserve positivity of the polynomial in question when the constrained set is modified to be homogeneous. The previous convergent hierarchy of unconstrained SP presented by Chandrasekaran et al. required one restrictive assumption on the exponent vectors besides rationality  \cite{ChandrasekaranShah16}. In the current result, that assumption may be removed by exploiting the compactness assumption in the reduction. \\\\
There are open questions resulting from this work. The current work does not provide a complexity estimate of the positive definite function to multiply; an upper on $p \in \mathbb{Z}_+$ in \cref{thm:main_result}. Previous work have studied the complexity estimate for the classical Positivstellensatz results \cite{Schweighofer04, Nie07}. Providing such upper bound for the current result would be an advancement in further understanding the computational nature of constrained signomial optimization problems. However, it would be a challenge: with the current proof strategy, the complexity estimate for our result is the same as that of Dickson's Positivstellensatz. In Dickson's Positivstellensatz, the complexity estimate depends on the degrees of the polynomials defining the homogeneous semialgebraic set \cite{DickinsonPovh14}. In the reduction, such polynomials originate from the rational halfspace constraints defining the compact convex set $\mathcal{X}$, and the degrees of the resulting polynomials after the variable change depend on the common multiplier of the denominators of the entries in the vectors defining the rational halfspaces, which are unbounded. Another direction to pursue is to relax the compactness assumption on the convex set. One difficulty of such extension lies in avoiding jumps when extending the the aforementioned set to the nonnegative orthant, which may be done through redundant constraints. The authors of this article have considered this extension and have found difficulty in developing a general method to construct redundant constraints that avoid all jumps without a compactness assumption. A vastly different proof strategy would likely be necessary to make the above extensions. 

\section{Acknowledgements}
The authors would like to thank Riley Murray for helpful and insightful discussions.

\bibliography{ref}

\begin{thebibliography}{23}
\providecommand{\natexlab}[1]{#1}
\providecommand{\url}[1]{\texttt{#1}}
\expandafter\ifx\csname urlstyle\endcsname\relax
  \providecommand{\doi}[1]{doi: #1}\else
  \providecommand{\doi}{doi: \begingroup \urlstyle{rm}\Url}\fi

\bibitem[Ahmadi and Hall(2017)]{Ahmadi17}
A.~A. Ahmadi and G.~Hall.
\newblock On the construction of converging hierarchies for polynomial
  optimization based on certificates of global positivity.
\newblock \emph{Mathematics of Operations Research}, 44:\penalty0 1192--1207,
  2017.

\bibitem[Artin(1927)]{Artin16}
E.~Artin.
\newblock Uber die zerlegung deniter funktionen in quadrate.
\newblock \emph{Abhandlungen aus dem Mathematischen Seminar der Universitat
  Hamburg}, 5:\penalty0 100--115, 1927.

\bibitem[Chandrasekaran and Shah(2016)]{ChandrasekaranShah16}
V.~Chandrasekaran and P.~Shah.
\newblock Relative entropy relaxations for signomial optimization.
\newblock \emph{{SIAM} Journal on Optimization}, 26\penalty0 (2):\penalty0
  1147--1173, 2016.

\bibitem[Chiang(2009)]{Chiang09}
M.~Chiang.
\newblock Nonconvex optimization for communication networks.
\newblock In \emph{Advances in Applied Mathematics and Global Optimization:In
  Honour of Gilbert Strang}, volume~17. Springer, 2009.

\bibitem[Delzell(2008)]{Delzell2008}
C.~N. Delzell.
\newblock Impossibility of extending polya’s theorem to “forms” with
  arbitrary real exponents.
\newblock \emph{Journal of Pure and Applied Algebra}, 212:\penalty0
  2612–2622, 2008.

\bibitem[Dickinson and Povh(2014)]{DickinsonPovh14}
P.~J.~C. Dickinson and J.~Povh.
\newblock On an extension of polya's positivstellensatz.
\newblock \emph{Journal of Global Optimization}, 61\penalty0 (4):\penalty0
  615–625, 2014.

\bibitem[Dressler et~al.(2017)Dressler, Ilima, and de~Wolf]{Dressler17}
M.~Dressler, S.~Ilima, and T.~de~Wolf.
\newblock A positivstellensatz for sums of nonnegative circuit polynonmials.
\newblock \emph{{SIAM} Journal of Applied Algebra and Geometry}, 1:\penalty0
  536–555, 2017.

\bibitem[Ilimanand and de~Wolff(2016)]{Iliman16}
S.~Ilimanand and T.~de~Wolff.
\newblock Amoebas, nonnegative polynomials and sums of squares supported on
  circuits.
\newblock \emph{Mathematical Sciences}, 3\penalty0 (9):\penalty0 461–475,
  2016.

\bibitem[Jabr(2007)]{Jabr07}
R.~A. Jabr.
\newblock Inductor design using signomial programming.
\newblock \emph{The International Journal for Computation and Mathematics in
  Electrical and Electronic Engineering}, 26\penalty0 (2):\penalty0 461–475,
  2007.

\bibitem[Lasserre(2001)]{Lasserre01}
J.~B. Lasserre.
\newblock Global optimization with polynomials and the problem of moments.
\newblock \emph{SIAM Journal on Optimization}, 11:\penalty0 796--817, 2001.

\bibitem[Marshall(2008)]{Marshall08}
M.~Marshall.
\newblock \emph{Positive Polynomials and Sums of Squares}, volume 146.
\newblock American Mathematical Society, 2008.

\bibitem[Murray et~al.(2019)Murray, Chandrasekaran, and
  Wierman]{MurrayChandrasekaranWierman19}
R.~Murray, V.~Chandrasekaran, and A.~Wierman.
\newblock Signomial and polynomial optimization via relative entropy and
  partial dualization.
\newblock \emph{arXiv preprint arXiv:1907.00814}, 2019.

\bibitem[Nie and Schweighofer(2007)]{Nie07}
J.~Nie and M.~Schweighofer.
\newblock On the complexity of putinar's positivstellensatz.
\newblock \emph{Journal of Complexity}, 23:\penalty0 135--150, 2007.

\bibitem[Parrilo(2000)]{Parrilo00}
P.~A. Parrilo.
\newblock \emph{Structured Semidefinite Programs and Semialgebraic Geometry
  Methods in Robustness and Optimization}.
\newblock PhD thesis, California Institute of Technology, 2000.

\bibitem[Polya(1928)]{Polya28}
G.~Polya.
\newblock Uber positive darstellung von polynomen vierteljahresschrift der
  naturforschenden ges.
\newblock \emph{Vierteljahresschr. Naturforsch. Ges. Zurich}, 73:\penalty0
  141--145, 1928.

\bibitem[Reznick(1995)]{Reznick1995}
B.~Reznick.
\newblock Uniform denominators in hilbert’s 17th problem.
\newblock \emph{Mathematische Zeitschrift}, 220:\penalty0 75–97, 1995.

\bibitem[Rockafellar(1970)]{Rockafellar70}
R.~T. Rockafellar.
\newblock \emph{Convex Analysis}.
\newblock Princeton University Press, 1970.

\bibitem[Roundtree and Rigler(1982)]{Rountree82}
D.~H. Roundtree and A.~K. Rigler.
\newblock A penalty treatment of equality constraints in generalized geometric
  programming.
\newblock \emph{Journal of Optimization Theory and Applications}, 11:\penalty0
  169–178, 1982.

\bibitem[Schmudgen(1991)]{Schmudgen91}
K.~Schmudgen.
\newblock The k-moment problem for compact semi-algebraic sets.
\newblock \emph{Mathematische Annalen}, 289:\penalty0 203--206, 1991.

\bibitem[Schweighofer(2004)]{Schweighofer04}
M.~Schweighofer.
\newblock On the complexity of schmudgen's positivstellensatz.
\newblock \emph{Journal of Complexity}, 20:\penalty0 529–543, 2004.

\bibitem[Silva and Tuncel(2020)]{Silva20}
M.~K. D.~C. Silva and L.~Tuncel.
\newblock A notion of total dual integrality for convex, semidefinite, and
  extended formulations.
\newblock \emph{{SIAM} Journal on Discrete Mathematics}, 34\penalty0
  (1):\penalty0 1147--1173, 2020.

\bibitem[York et~al.(2018)York, \"{O}zt\"{u}rk, Burnell, and
  Hoburg]{York2018b-sig-aero}
M.~A. York, B.~\"{O}zt\"{u}rk, E.~Burnell, and W.~W. Hoburg.
\newblock Efficient aircraft multidisciplinary design optimization and
  sensitivity analysis via signomial programming.
\newblock \emph{{AIAA} Journal}, 56\penalty0 (11):\penalty0 4546--4561, 2018.
\newblock \doi{10.2514/1.j057020}.

\bibitem[Zhao et~al.(2016)Zhao, Poupart, and Gordon]{Zhao16}
H.~Zhao, P.~Poupart, and G.~Gordon.
\newblock A unified approach for learning the parameters of sum-product
  networks.
\newblock In \emph{Advances in Neural Information Processing Systems 29}, pages
  433--441, 2016.

\end{thebibliography}

\section*{Appendix}
Consider the set $\Omega = \{ \xv \in \mathbb{R}^n_+: ||\xv||_2 = 1 \}$, which is compact. $f_0(\xv) > 0$ for all $\xv \in \mathbb{R}^n_{+} \cap \bigcap_{i \in J} f_i^{-1}(\mathbb{R}_+) \backslash \{\mathbf{0}\}$ iff $f_0(\xv) > 0$ for all $\xv \in \Omega \cap \mathbb{R}^n_{+} \cap \bigcap_{i \in J} f_i^{-1}(\mathbb{R}_+)$ since $f_0(\xv)$ is homogeneous and its positivity is invariant to the scale of $\xv$. Thus we may restrict to the leveled set. $\Omega$ is the intersection of the level set and the nonnegative orthant since the theorem only concerns the positivity of $f(\xv)$ on the nonnegative orthant. Make the following observations: \\
\begin{enumerate}
    \item Without loss of generality assume that $\text{deg}(f_i(\xv)) \geq 1 \mbox{ and } \max_{\xv \in \Omega}\{|| \nabla f_i(\xv) ||\} \leq 1 \; \forall i \in I$. The second condition may be achieved by scaling. This is not restrictive since the theorem only considers positivity of homogeneous polynomials $(f_i(\xv))_{i \in I}$, which is invarint under scaling. 
    \item By mean value theorem, for any $\xv, \yv \in \Omega$, and any $i \in I$, there exists $\alpha \in [0,1] \\ \mbox{ such that } f_i(\xv) - f_i(\yv) = (\xv - \yv)^\top \nabla f_i(\alpha \xv)$. Thus
    \begin{align*}
        ||f_i(\xv) - f_i(\yv)||_2 &= ||(\xv - \yv)^\top \nabla f_i(\alpha \xv)||_2 \\
        &\leq ||\xv - \yv||_2 || \nabla f_i(\alpha \xv) ||_2 \\
        & \leq ||\xv - \yv||_2 \max_{\xv \in \Omega}\{|| \nabla f_i(\xv) || \}  \\
        &\leq ||\xv - \yv||_2 \; \forall i \in I.
    \end{align*}
    This implies $\forall i \in I$, $f_i(\xv)$ is a continuous function.
    \item $\forall i \in I$, $||f_i(\xv)||_2 = ||f_i(\xv) - f_i(\mathbf{0})||_2 \leq||\xv||_2 \leq 1 \; \forall \xv \in \Omega$. \\
\end{enumerate} 
Now define the following compact sets: \\
\begin{itemize}
    \item $\Omega_0 = \Omega \cap f_0^{-1}(-\mathbb{R}_+)$.
    \item $\Omega_j = \Omega \cap f_j^{-1}(\mathbb{R}_+)$.
    \item $\Omega_J = \Omega \cap \bigcap_{i \in J} f_i^{-1}(\mathbb{R}_+) = \bigcap_{i \in J} \Omega_i \; \forall J \subseteq I$. \\
\end{itemize}
Observe that $f_0(\xv) > 0$ for all $\xv \in \mathbb{R}^n_{+} \cap \bigcap_{i \in I} f_i^{-1}(\mathbb{R}_+) \backslash \{\mathbf{0}\}$ iff $\Omega_0 \cap \Omega_I = \emptyset$. The goal is to show that $\exists J \subseteq I$ $\Omega_0 \cap \Omega_J = \emptyset$, which implies then $f_0(\xv) > 0$ for all $\xv \in \mathbb{R}^n_{+} \cap \bigcap_{i \in J} f_i^{-1}(\mathbb{R}_+) \backslash \{\mathbf{0}\}$. \\\\
Consider the function: $\xi(\xv) = \sup \{ -f_i(\xv) \; | \; i \in I \} \; \forall \xv \in \Omega$. By observation (2), $\zeta(\xv)$ is a supremum of continuous function and is thus continuous. $\Omega_0 \cap \Omega_I = 0$ by assumption, so $\xi(\xv) > 0 \; \forall \xv \in \Omega_0$. Moreover, by observation (3), $\xi(\xv) \in (0, 1] \; \forall \xv \in \Omega_0$. Let $\epsilon = \min_{\xv \in \Omega_0} \xi(\xv)$. Since $\Omega_0$ is compact, and $\xi(\xv)$ is continuous, by the extreme value theorem, the $\min$ is attained in $\Omega_0$. So $\epsilon \in (0, 1]$. \\\\
Now consider the following two facts. (a) any $\xv \in \Omega_0$, there exists some $i \in I \mbox{ such that } -f_i(\xv) \geq \frac{2}{3} \xi(\xv) \geq \frac{2}{3} \epsilon > 0 $. (b) for any $\yv \in \Omega_0 \mbox{ such that } ||\xv - \yv|| \leq \frac{1}{3} \epsilon $, $f_i(\yv) \leq f_i(\xv) + ||\xv - \yv ||_2 \leq -\frac{2}{3} \epsilon + \frac{1}{3} \epsilon < 0$. So $\yv \notin \Omega_i$. \\\\
Now consider the algorithm below. Suppose $\zv_t$ is chosen at $t_{\text{th}}$ iteration of while loop. First, $\exists i \in I$ to add to $J$, by claim (a). Since $f_i(\zv_t) \leq -\frac{2}{3} < 0$, by claim (b), for any $\zv_{t+1} \mbox{ such that } ||\zv_{t+1} - \zv_{t}|| \leq \frac{1}{3} \epsilon$,  $\zv_{t+1} \notin \Omega_i \implies \zv_{t+1} \notin \Omega_{J \cup i} $.  Thus in each iteration, $\zv_t$ has a distance of at least $\frac{1}{3}\epsilon$ from the previous ones. Since $\Omega_0$ is a compact set, the algorithm terminates in finite time, which implies $\Omega_0 \cap \Omega_J = \emptyset$. The desired $J \subseteq I$ is obtained. 
\begin{algorithm}
\label{alg:iterations}
\SetAlgoLined
\caption{Finding $J \subseteq I \mbox{ s.t. } \Omega_0 \cap \Omega_J = \emptyset$}
 Let $J = \emptyset$ \\
 \While{$\exists \zv \in \Omega_0 \cap \Omega_J$}{   
    If for some $i \in I$,  $f_i(\zv) \leq -\frac{2}{3} \epsilon$, then $J = J \cup \{ i \}$ 
 }
 \Return J
\end{algorithm} 
\end{document}